\documentclass[12pt]{amsart}

\usepackage[T1]{fontenc}
\usepackage[utf8]{inputenc}
\usepackage{lmodern}
\usepackage{microtype}
\usepackage{mathtools}
\usepackage{amssymb}
\usepackage{enumitem}
\usepackage[colorlinks=true,linkcolor=blue,citecolor=blue,urlcolor=blue]{hyperref}

\newcommand{\Triv}{\operatorname{Triv}}
\newcommand{\Aut}{\operatorname{Aut}}

\newtheorem{athm}{Theorem}

\theoremstyle{plain}
\newtheorem{theorem}{Theorem}[section]
\newtheorem{lemma}[theorem]{Lemma}
\newtheorem{corollary}[theorem]{Corollary}
\newtheorem{Proposition}[theorem]{Proposition}
\newtheorem{example}{Example}
\theoremstyle{definition}

\newtheorem*{corollary*}{Corollary}
\theoremstyle{remark}
\newtheorem{remark}[theorem]{Remark}

\title[Ideals and Solvability in Skew Braces]
{Ideals and Solvability in Skew Braces}

\author{Marco Damele}
\address{Department of Mathematics and Computer Science,
University of Cagliari, Via Ospedale 72, 09124 Cagliari, Italy}
\email{marco.damele@unica.it}

\author{G\"ulİn Ercan}
\address{Department of Mathematics, Middle East Technical University, Ankara, Turkey}
\email{ercan@metu.edu.tr}

\subjclass[2020]{}
\keywords{}

\begin{document}

\begin{abstract}
We investigate how nilpotency assumptions on the multiplicative group
of a finite skew brace constrain its ideal structure and solvability.
Our first main result shows that, if \(B=(B,+,\cdot)\) is finite and
\((B,\cdot)\) is nilpotent, then the additive Fitting subgroup
\(F(B,+)\) is a non-zero ideal of \(B\). As consequences, every finite
simple skew brace with nilpotent multiplicative group is isomorphic to
\(\Triv(C_p)\) for some prime \(p\), and every such skew brace admits an
ideal of prime index. In particular,
\[
B*B\neq B
\qquad\text{and}\qquad
\partial(B) \neq B.
\]
We also show that nilpotency of the multiplicative group does not, in
general, imply either left nilpotency or solvability.

Motivated by this obstruction, we then study the solvability of
two-sided skew braces, both in the finite and in the general setting.
We prove that, whenever \(I\) is an ideal of a two-sided skew brace
\(B\), the internal commutator ideal \([I,I]_I\) is again an ideal of
\(B\). This yields an extension theorem for solvability and implies
that, for finite two-sided skew braces, solvability of the skew brace is
equivalent to solvability of either the additive or the multiplicative
group. In particular, every finite skew brace with abelian
multiplicative group is solvable. Finally, we show that, although this
equivalence fails in general for infinite two-sided skew braces, a
residual form of it still survives: if the additive group is solvable,
then every finite homomorphic image of the multiplicative group is
solvable.
\end{abstract}

\maketitle

\noindent\textbf{2020 Mathematics Subject Classification.}
Primary 16T25; Secondary 20D10, 20D15, 20F16.

\medskip

\noindent\textbf{Keywords.}
Skew brace, ideal, solvable skew brace, two-sided skew brace,
nilpotent multiplicative group, Fitting subgroup, finite quotient.

\section{Introduction}

Skew braces were introduced by Guarnieri and Vendramin
\cite{Guarnieri_2016} as a non-commutative generalization of
the braces introduced by Rump in \cite{Rump2007}. A skew brace is a
triple $B=(B,+,\cdot)$
such that \((B,+)\) and \((B,\cdot)\) are groups on the same underlying
set and
\[
a\cdot(b+c)=a\cdot b-a+a\cdot c
\]
for all \(a,b,c\in B\).
When the additive group is abelian, one recovers the original notion
of a brace.

Skew braces arise naturally in the study of non-degenerate
set-theoretic solutions of the Yang--Baxter equation. More precisely,
they provide an algebraic framework in which such solutions can be
constructed and investigated, and they allow structural properties of
the solutions to be related to properties of the associated skew
braces; see, for instance,
\cite{Guarnieri_2016,
CedoSmoktunowiczVendramin2019,
BallesterBolinchesEstebanRomeroJimenezSeralPerezCalabuig2024,CV}.
They are also closely related to regular subgroups of holomorphs and,
consequently, to Hopf--Galois structures; see, among others,
\cite{ByottSolubility2015,TsangQinSolvability,ByottInsolubleTransitive}.

A natural and interesting problem in the theory is to understand how
the internal structure of a skew brace is influenced by the
group-theoretic properties of its additive and multiplicative groups. In particular,
one may ask to what extent properties of one of the groups force
analogous properties of the other group or impose restrictions on the
ideals of the skew brace.

To the best of the authors' knowledge, only a few results are currently
available in this direction. Structural restrictions for two-sided skew
braces were obtained in
\cite{TrappeniersTwoSided,DamelePerfectTwoSided,DameleZGroups}.
Further results under cyclicity assumptions on Sylow subgroups were
established in \cite{DameleSimpleCyclicSylow}, including a classification
of finite simple skew braces with multiplicative \(Z\)-group and several
splitting theorems.

Among the most important structural properties of a skew brace are
simplicity and solvability. The solvability of skew braces and its
relationship with the solvability of the associated Yang--Baxter
solutions have been studied in
\cite{BallesterBolinchesEstebanRomeroJimenezSeralPerezCalabuig2024}.
Simple braces and simple skew braces have also attracted considerable
attention. Important constructions and structural results for simple
braces were obtained by Bachiller and by Cedó, Jespers and Okniński;
see
\cite{BachillerSimpleBraces,BachillerCedoJespersOkninski,
CedoJespersOkninski}.
For simple skew braces, see in particular
\cite{SmoktunowiczVendramin2018,ByottSimpleSkewBraces}.
The behaviour of simple skew braces can differ substantially from that
of classical braces and finite groups. In particular, simple skew
braces of odd order may occur, showing that the interaction between
the two group operations can produce phenomena which are not visible
from either associated group separately.

The present paper continues this line of research by investigating how
nilpotency assumptions on one of the associated groups constrain the
structure of the skew brace. Finite skew braces of nilpotent type,
namely skew braces whose additive group is nilpotent, were studied
systematically by Cedó, Smoktunowicz and Vendramin in
\cite{CedoSmoktunowiczVendramin2019}.

In the first part of this paper, we consider the dual
situation in which the multiplicative group is nilpotent, and we
investigate how this hypothesis affects the ideal structure of the
skew brace.

Our first main result shows that the additive Fitting subgroup is
always an ideal.

\begin{athm}\label{intro:theorem-A}
Let \(B=(B,+,\cdot)\) be a non-zero finite skew brace such that
\((B,\cdot)\) is nilpotent. Then \(F(B,+)\) is a non-zero ideal of
\(B\).
\end{athm}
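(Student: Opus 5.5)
The plan is to check, in order, that \(F(B,+)\) is non-zero, that it is a left ideal normal in \((B,+)\), and that it is normal in \((B,\cdot)\). Put \(N=(B,+)\), \(G=(B,\cdot)\), and \(\lambda_a(b)=-a+ab\), so that \(\lambda\colon G\to\Aut(N)\) is a homomorphism and \(ab=a+\lambda_a(b)\). Write \(I=F(N)=\prod_p O_p(N)\).

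\emph{Non-vanishing.} \(G\) is nilpotent, hence solvable. By Byott's theorem (a finite group admitting a nilpotent, indeed any solvable-enough, regular subgroup of its holomorph; see \cite{ByottSolubility2015,TsangQinSolvability}), \(N\) is then solvable. A non-trivial finite solvable group has non-trivial Fitting subgroup, so \(I\neq 0\).

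\emph{Left ideal.} \(I\) and each \(O_p(N)\) are characteristic in \(N\). Hence they are normal additive subgroups invariant under every \(\lambda_a\), so each is a left ideal. In particular each is a multiplicative subgroup, and \(aI=a+\lambda_a(I)=a+I\).

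\emph{Multiplicative normality.} For the left ideal \(I\), the condition \(aI=Ia\) is equivalent to
\[
\lambda_x(a)-a\in I \qquad\text{for all } x\in I,\ a\in B,
\]
since \(xa=x+\lambda_x(a)\) and \(x\in I\). So it suffices to show that \(I\) acts trivially, via \(\lambda\), on \(N/I\), and it is enough to do this for \(x\in O_p(N)\) one prime at a time.

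Now \(O_p(N)\) is a multiplicative subgroup of \(p\)-power order. Therefore it lies in the unique Sylow \(p\)-subgroup \(P\) of the nilpotent group \(G\), and it centralizes (multiplicatively) the Hall \(p'\)-subgroup \(H\) of \(G\). I would then argue as follows.

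\begin{enumerate}[label=(\roman*)]
\item \(H\) is a multiplicative subgroup of order coprime to \(p\). Since \(G\) acts regularly on \(N\) in the holomorph, the \(H\)-orbit of \(0\) gives a set of \(p'\)-representatives, while \(P\cdot 0=P\) as a set.
\item Any \(a\in B\) factors as \(a=hy\) with \(h\in H\) and \(y\in P\). Using \(xh=hx\) together with \(xh=x+\lambda_x(h)\) and \(hx=h+\lambda_h(x)\), one gets \(\lambda_x(h)\equiv h \pmod{I}\), because \(\lambda_h(x)\in I\) and \(I\) is additively normal.
\item For the \(P\)-part, I would pass to \(\bar B=B/O_{p'}\)-type quotients, or use induction on \(|B|\) applied to \(B/O_p(N)\), which is again a skew brace with nilpotent multiplicative group. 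In such a quotient, \(P\) acts as a \(p\)-group of automorphisms, and \(O_p(N)\) maps to a \(p\)-group that should act trivially modulo \(F\).
\end{enumerate}

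The main obstacle is the \(P\)-part in step (iii): showing \(\lambda_x(y)-y\in I\) for \(x\in O_p(N)\) and \(y\in P\). I would first try to prove that \(P\) itself is an additive \(p\)-subgroup modulo \(O_{p'}(N)\). Then the semidirect product \(O_p(N)\rtimes\lambda(P)\) restricted to \(N/O_{p'}(N)\) is a \(p\)-group. Since \(F\) of that quotient contains \(O_p\), the commutators \([x,y]\) computed in the holomorph land in the preimage of the Fitting subgroup, and the coprime-action lemma (applied to \(H\) acting on \(N/O_p(N)\)) should finish the argument.
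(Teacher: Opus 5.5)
\textbf{There is a genuine gap: step (iii) is exactly where the difficulty lies, and it is left as a list of things to try.}

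Your non-vanishing step and left-ideal step match the paper. So do the reduction of multiplicative normality to $\lambda_x(a)-a$ lying in the subgroup, and the computation in (ii) giving $\lambda_x(h)-h\in O_p(N)$ for $x\in O_p(N)$, $h\in H$. One citation is off: that nilpotent $(B,\cdot)$ forces solvable $(B,+)$ is due to Tsang--Qin \cite{TsangQinSolvability}; Byott's theorem goes the other way, and there is no ``solvable-enough'' version.

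The problems in (iii), and one in (i):
\begin{enumerate}[label=(\alph*)]
\item The induction on $B/O_p(N)$ is circular. That quotient is a skew brace only once $O_p(N)$ is known to be normal in $(B,\cdot)$, which is the claim being proved.
\item Passing to $N/O_{p'}(N)$ and trying to make $P$ additively a $p$-group targets the wrong quotient.
\item The closing appeal to coprime action does not name any statement that would yield $\lambda_x(y)-y\in I$.
\item Step (i) has a hole too. Saying that the $H$-orbit of $0$ gives $p'$-representatives does not show that $H$ is an additive subgroup. You need that in order to have a \emph{subgroup} of $N/O_p(N)$ fixed by $\lambda_x$.
\end{enumerate}

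\textbf{The missing idea is that no $P$-part is needed.} Work modulo $Q=O_p(N)$ rather than $F(N)$. Then $\bar N=N/Q$ is solvable with $O_p(\bar N)=1$. For $x\in Q$, let $\alpha$ be the automorphism of $\bar N$ induced by $\lambda_x$; it has $p$-power order, say $p^m$, because $x$ does.

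Both groups of $B$ are solvable, so the Hall theorem for finite skew braces \cite{TrumanSylow} shows that $H=O_{p'}(B,\cdot)$ is a Hall $p'$-subbrace. Hence $(H+Q)/Q$ is a Hall $p'$-subgroup of $\bar N$, and your computation (ii) says $\alpha$ fixes it pointwise.

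A fixed-point argument now finishes the proof:
\begin{enumerate}[label=(\arabic*)]
\item $F(\bar N)$ is a normal $p'$-subgroup, so it lies in every Hall $p'$-subgroup. Hence $[F(\bar N),\alpha]=1$.
\item The Three Subgroups Lemma in $\bar N\rtimes\langle\alpha\rangle$ gives $[\bar N,\alpha]\le C_{\bar N}(F(\bar N))\le F(\bar N)$.
\item So each $[g,\alpha]$ is fixed by $\alpha$, which gives $1=[g,\alpha^{p^m}]=[g,\alpha]^{p^m}$.
\item A $p'$-element of $p$-power order is trivial, so $[g,\alpha]=1$ for all $g$.
\end{enumerate}
Thus $\lambda_x$ acts trivially on all of $N/Q$ at once, i.e.\ $Q*B\le Q$. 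So each $O_p(N)$ is an ideal, and so is $F(N)=\prod_p O_p(N)$. The factorization $a=hy$ is never used.
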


The proof combines the existence of Hall subbraces with a
group-theoretic fixed-point argument. More precisely, for every prime
\(p\) dividing \(|F(B,+)|\), we prove that \(O_p(B,+)\) is an ideal of
\(B\). The main difficulty is to establish its normality in the
multiplicative group. This is achieved by studying the action induced
by its elements on $(B,+)/O_p(B,+)$
and applying a fixed-point lemma for \(p\)-power order automorphisms of
finite solvable groups. As an immediate consequence, finite simple skew braces with nilpotent
multiplicative group are completely determined.

\begin{corollary*}
Let \(B\) be a finite simple skew brace such that \((B,\cdot)\) is
nilpotent. Then $B\simeq \Triv(C_p)$
for some prime \(p\).
\end{corollary*}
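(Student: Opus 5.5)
The plan is to deduce the corollary directly from Theorem~\ref{intro:theorem-A}. A simple skew brace is non-zero and its only ideals are $0$ and $B$. Since $(B,\cdot)$ is nilpotent, Theorem~\ref{intro:theorem-A} says that $F(B,+)$ is a non-zero ideal. Simplicity then forces $F(B,+)=B$, so $(B,+)$ is nilpotent.

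At this point both groups are nilpotent, and I would decompose along primes. For each prime $p$ dividing $|B|$, the subgroup $O_p(B,+)$ is the Sylow $p$-subgroup of $(B,+)$. The proof of Theorem~\ref{intro:theorem-A} already shows that $O_p(B,+)$ is an ideal of $B$; alternatively, one can use the standard fact that in a skew brace of nilpotent type the Sylow subgroups of the additive group are ideals. By simplicity, $O_p(B,+)=B$ for that prime, so $|B|$ is a power of a single prime $p$.

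Next I would use the known fact that a finite skew brace of $p$-power order is left (indeed right) nilpotent, in particular $B*B\neq B$; see \cite{CedoSmoktunowiczVendramin2019}. Since $B*B$ is an ideal, simplicity gives $B*B=0$. Then $a\cdot b=a+b$ for all $a,b\in B$, so $B$ is trivial, $B=\Triv(B,+)$, and its ideals are exactly the normal subgroups of $(B,+)$. A non-trivial $p$-group has a normal subgroup of order $p$, namely one of order $p$ inside its center. Simplicity therefore forces $|B|=p$, and so $B\simeq\Triv(C_p)$.

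The only step needing care is the passage from $F(B,+)=B$ to $B$ having prime-power order. Here I would cite the fact that $O_p(B,+)$ is an ideal, which is the main content of the proof of Theorem~\ref{intro:theorem-A}, rather than re-prove it. The rest of the argument is routine.
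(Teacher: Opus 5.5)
Your proof is correct and follows the paper's argument. Simplicity forces $F(B,+)=B$. The Sylow $p$-subgroup of $(B,+)$ is then a non-zero ideal: you quote the proof of Theorem~\ref{intro:theorem-A}, while the paper re-checks this directly, since the subgroup is characteristic, hence a left ideal and so a multiplicative subgroup of order $|B|_p$, which is normal in the nilpotent group $(B,\cdot)$. Finally, left nilpotency of skew braces of $p$-power order gives $B*B=0$, whence $B\simeq\Triv(C_p)$.

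Two asides you do not use are false as stated and should be removed. First, additive Sylow subgroups of a skew brace of nilpotent type need not be ideals unless $(B,\cdot)$ is also nilpotent: the brace on $\mathbb{Z}/6$ with $a\cdot b=a+(-1)^a b$ has multiplicative group $S_3$, and $\{0,3\}$ is not normal in it. Second, skew braces of prime-power order need not be right nilpotent.
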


Here, for a group \(G\), we denote by \(\Triv(G)\) the trivial skew brace over
\(G\), namely the skew brace whose additive and multiplicative groups
both coincide with \(G\) and whose two operations are identical.

This result may be viewed as the finite analogue of
\cite{DameleLoiSimpleLieNilpotent}. There it is proved that a simply
connected simple Lie skew brace with nilpotent multiplicative Lie group
is necessarily one-dimensional and abelian, and hence isomorphic to
the trivial Lie skew brace \((\mathbb{R},+,+)\).

We also obtain the following consequences.

\begin{corollary*}\label{intro:prime-index}
Let \(B\) be a finite skew brace such that \((B,\cdot)\) is nilpotent.
Then \(B\) admits an ideal of index \(p\) for some prime \(p\).
\end{corollary*}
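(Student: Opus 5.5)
The plan is to argue by induction on \(|B|\), using Theorem~\ref{intro:theorem-A} to produce a non-zero ideal and then passing to quotients. Throughout, \(B\) is assumed non-zero; otherwise the statement is vacuous. The key observation is that nilpotency of the multiplicative group passes to quotients and to ideals. If \(I\) is an ideal of \(B\), then \((B/I,\cdot)\) is a quotient of \((B,\cdot)\), hence nilpotent.

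We first dispose of the case where \(B\) is simple. The preceding Corollary then gives \(B\simeq\Triv(C_p)\). In that case the zero ideal \(\{0\}\) has index \(p\), and we are done.

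Now suppose \(B\) is not simple, and choose a maximal proper ideal \(M\) of \(B\); it exists since \(B\) is finite and non-zero. The quotient \(B/M\) is a non-zero skew brace. It is simple, because ideals of \(B/M\) correspond bijectively to ideals of \(B\) containing \(M\). Its multiplicative group \((B,\cdot)/(M,\cdot)\) is nilpotent. By the simple case, \(B/M\simeq\Triv(C_p)\) for some prime \(p\), so \(|B:M|=|B/M|=p\). Thus \(M\) is an ideal of index \(p\).

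So in fact no induction is needed: the whole proof reduces to the existence of a maximal proper ideal and the classification in the preceding Corollary. The only steps to check carefully are routine. First, the correspondence theorem for ideals of skew brace quotients. Second, the fact that the index of \(M\) in \(B\) equals the order of the quotient skew brace, which holds since \(B/M\) has underlying set \(B/(M,+)\).

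The main substantive input is the preceding Corollary itself, which rests on Theorem~\ref{intro:theorem-A}. Indeed, if \(B\) is simple, the non-zero ideal \(F(B,+)\) must equal \(B\), so \((B,+)\) is nilpotent. One then has to rule out everything but \(\Triv(C_p)\). Since we may assume that Corollary, there is no real obstacle here.
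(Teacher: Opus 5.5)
Your proposal is correct and is essentially the paper's proof: take a maximal proper ideal $M$, observe that $B/M$ is simple with nilpotent multiplicative group, and apply Corollary~\ref{cor: simple-multiplicative-nilpotent} to get $B/M\simeq\Triv(C_p)$, so $[B:M]=p$. Your separate treatment of the simple case is already covered by this argument (there $M=0$), and your standing assumption $B\neq 0$ is a genuine hypothesis rather than a vacuous case, since the zero brace has no ideal of prime index; the paper makes the same assumption tacitly.
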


\begin{corollary*}\label{intro:non-perfect}
Let \(B\) be a finite skew brace such that \((B,\cdot)\) is nilpotent.
Then $B*B\neq B$ and $\partial(B)\neq B.$
\end{corollary*}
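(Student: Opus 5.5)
We derive this directly from the preceding Corollary, which says that $B$ has an ideal $I$ of prime index $p$, by passing to the quotient $B/I$. Here $B$ is non-zero; for $B=0$ both $B*B$ and $\partial(B)$ equal $B$, so the statement is meant for non-zero skew braces. The quotient $B/I$ is a skew brace of order $p$. Its additive and multiplicative groups both have order $p$, so both are isomorphic to $C_p$.

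The key step is that every skew brace of prime order $p$ is trivial, i.e.\ $B/I\simeq\Triv(C_p)$. To see this, let $\lambda\colon (B/I,\cdot)\to \Aut(B/I,+)\simeq C_{p-1}$ be the usual action $\lambda_a(b)=-a+ab$. It is a homomorphism from a group of order $p$ to a group of order $p-1$. Since $\gcd(p,p-1)=1$, $\lambda$ is trivial. Hence $ab=a+b$ for all $a,b$, and $B/I$ is trivial.

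Now we compare with the two operators. Recall $a*b=-a+ab-b$, so $B*B$ is the additive subgroup generated by these elements. The construction $*$ is compatible with the quotient map $\pi\colon B\to B/I$: we have $\pi(B*B)=\pi(B)*\pi(B)=(B/I)*(B/I)$. In a trivial skew brace every $a*b$ is $0$, so $\pi(B*B)=0$ and $B*B\subseteq I\neq B$.

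For $\partial(B)$, I take the definition used in the paper: the smallest ideal $J$ such that $B/J$ is a trivial skew brace with abelian additive group, equivalently the ideal generated by $B*B$ and $[B,B]_+$. Since $B/I\simeq \Triv(C_p)$ is trivial with abelian additive group, minimality gives $\partial(B)\subseteq I$. Alternatively, $\pi$ kills both generating sets, because $(B/I,+)$ is abelian and $B/I$ is trivial. So $\partial(B)\subseteq I\subsetneq B$.

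I expect no real obstacle; the content lies in the previous Corollary and in Theorem~A. The only delicate points are checking that $\pi$ maps $*$-products and additive commutators to their counterparts in the quotient, and fixing the precise definition of $\partial(B)$ being used.
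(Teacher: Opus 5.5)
Your proof is correct and follows the paper's argument. You take the ideal $I$ of prime index from the preceding corollary, observe that $B/I\simeq\Triv(C_p)$, and deduce $B*B\le I$ and $\partial(B)\le I$; along the way you fill in details the paper leaves implicit, namely why a skew brace of order $p$ is trivial (the map $\lambda$ from a group of order $p$ into $\Aut(C_p)\simeq C_{p-1}$ must be trivial) and a direct minimality argument for $\partial(B)\le I$. Your remark that $B$ must be non-zero is also accurate.
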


On the other hand, Example~\ref{example:nilpotent-not-left-nilpotent}
shows that the nilpotency of the multiplicative group does not force
the skew brace to be left nilpotent and does not guarantee the
existence of an ideal of index \(p\) for every prime divisor \(p\) of
its order.

In view of Corollary \ref{cor: simple-multiplicative-nilpotent} it is natural to ask whether a finite skew brace with nilpotent
multiplicative group must be solvable. The answer is negative. Indeed,
the skew brace $\operatorname{SmallBrace}(32,24003)$
from
\cite[Example~38]{BallesterBolinchesEstebanRomeroJimenezSeralPerezCalabuig2024}
is not solvable. Since it has order \(32\), its multiplicative group is
a \(2\)-group and hence is nilpotent. Thus nilpotency of the
multiplicative group alone is too weak to imply solvability.

This motivates the stronger question of whether a finite skew brace
with abelian multiplicative group must be solvable. Recall that a skew
brace \(B=(B,+,\cdot)\) is called \emph{two-sided} if, in addition to
the usual skew brace identity, it also satisfies
\[
(a+b)\cdot c=a\cdot c-c+b\cdot c
\]
for all \(a,b,c\in B\). Every skew brace with abelian multiplicative
group is two-sided. Therefore, in the second part of the paper, we
study the solvability of finite two-sided skew braces. The key result
is the following ideal-theoretic property.

\begin{athm}\label{intro:theorem-B}
Let \(B\) be a two-sided skew brace and let \(I\trianglelefteq B\).
Then the first term of the derived series of \(I\), computed in \(I\),
is an ideal of \(B\); that is,
\[
\partial_I^1(I)=[I,I]_I\trianglelefteq B.
\]
\end{athm}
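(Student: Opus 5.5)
The plan is to describe $J:=[I,I]_I=\partial_I^1(I)$ concretely as a subset of $I$ and then check the three conditions for an ideal of $B$ one at a time: $J$ is a left ideal ($\lambda_b(J)\subseteq J$), $J$ is normal in $(B,\cdot)$, and $J$ is normal in $(B,+)$. First I will use the description of the commutator of a skew brace with itself: inside the skew brace $I$, the ideal $[I,I]_I$ is the additive subgroup generated by the additive commutators $[I,I]_+$ together with the products $a*c=-a+ac-c$, for $a,c\in I$. If that description does not close up for skew braces in general, I will use the ideal of $I$ generated by these elements instead. The argument below only needs that $J$ is defined canonically from the skew brace structure of $I$.

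\emph{Multiplicative normality.} I will use the standard fact that in a two-sided skew brace every inner automorphism $x\mapsto bxb^{-1}$ of $(B,\cdot)$ is an automorphism of the skew brace $B$; this is where the identity $(a+b)c=ac-c+bc$ enters. Since $I$ is normal in $(B,\cdot)$, such a map restricts to a skew brace automorphism of $I$. Because $J$ is canonically defined from the skew brace $I$, it is preserved by every skew brace automorphism of $I$. Hence $bJb^{-1}=J$ for all $b\in B$.

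\emph{Left ideal.} Each $\lambda_b$ is an automorphism of $(B,+)$ preserving $I$, so $\lambda_b([I,I]_+)\subseteq[I,I]_+$. For the products, take $a,c\in I$ and set $a'=bab^{-1}\in I$ and $c'=\lambda_b(c)\in I$. Then
\[
\lambda_b(a*c)=\lambda_b(\lambda_a(c)-c)=\lambda_{ba}(c)-\lambda_b(c)=\lambda_{a'}\lambda_b(c)-\lambda_b(c)=a'*c'\in J.
\]
So $\lambda_b$ maps the generators, and hence all of $J$, into $J$.

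\emph{Additive normality.} I expect this to be the main obstacle. Take $x\in J$ and put $y=\lambda_b(x)\in J$ and $z=bxb^{-1}\in J$. Then
\[
b+y-b=bx-b=zb-b=z+\lambda_z(b)-b=z+z*b.
\]
Since $\lambda_b$ is bijective on $J$, it therefore suffices to prove $J*B\subseteq J$. I would attack this using two-sidedness. The identity $(a+b)c=ac-c+bc$ rewrites to give an expansion of $(u+v)*c$ in terms of $u*c$ and $v*c$, up to additive conjugations inside $I$. This expansion reduces the claim to the generators of $J$. For a product generator, I would express $(a*c)*b$ via the associativity-type identities for $*$ to see that it lies in $[I,I]_I$. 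For a commutator generator, I would use that right multiplication by $b$ composed with $-b$ behaves like an additive endomorphism modulo $J$, so it sends $[I,I]_+$ into $[I,I]_+ + I*I\subseteq J$. If this direct computation becomes unwieldy, my fallback is to apply the same canonicity argument to the map $x\mapsto b+x-b$. One can check whether, in a two-sided skew brace, it coincides on $I$ with a composite of the automorphisms $\lambda$ and multiplicative conjugation; since the left-ideal and multiplicative-normality steps show that $J$ is stable under both, this would finish the proof.
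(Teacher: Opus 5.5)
Your outline is correct. It differs from the paper mainly in how the first two ideal properties are obtained. The paper gets that $K=[I,I]_I=[I,I]_+ + I^2$ is a left ideal of $B$ and normal in $(B,\cdot)$ by citing Trappeniers' Lemmas 5.2 and 5.3 for $[I,I]_+$ and $I^2$ separately. You prove both directly:
\begin{itemize}
\item multiplicative normality follows because $x\mapsto b\cdot x\cdot b^{-1}$ is a skew brace automorphism of a two-sided skew brace (from $(u-w+v)\cdot c=u\cdot c-w\cdot c+v\cdot c$), combined with the canonicity of $[I,I]_I$;
\item the left-ideal property follows from the identity $\lambda_b(a*c)=(b\cdot a\cdot b^{-1})*\lambda_b(c)$.
\end{itemize}
This is self-contained, and it shows that two-sidedness enters only through multiplicative normality and the $*$-step. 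Your hedge about the description of $J$ is unnecessary, since $[I,I]_I=[I,I]_+ + I*I$ by Lemma~\ref{commutatorTwosided}. Contrary to your remark, though, the left-ideal step genuinely uses this description and not mere canonicity: $\lambda_b$ is an automorphism of $(I,+)$ but not a skew brace automorphism of $I$. Your reduction of additive normality to $J*B\subseteq J$ is equivalent to the paper's final step $b+K=b\cdot K=K\cdot b\subseteq K+b$.

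For $J*B\subseteq J$, the paper shows that $\varphi_b\colon z\mapsto z*b+K$ is a skew brace homomorphism from $I$ to the trivial abelian brace $I/K$. Its kernel is then an ideal of $I$ containing all generators of $K$. Your generator-by-generator version is equivalent, and your commutator case works: $\mu_b(u)=u\cdot b-b=u+u*b$ is an additive automorphism mapping $I$ into $I$.

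The product case, however, is only asserted, and it is the heart of the argument. Here is the point you need.
\begin{itemize}
\item By your two-sided expansion, $u\mapsto u*b+J$ is a homomorphism $(I,+)\to(I/J,+)$. Hence $(a*d)*b\equiv -(a*b)+(a\cdot d)*b-(d*b)$ modulo $J$.
\item By Lemma~\ref{lem:star-product-identity}, $(a\cdot d)*b=a*(d*b)+d*b+a*b$.
\item Here $a*(d*b)\in I*I\subseteq J$ precisely because $d*b\in I$, as $I$ is an ideal.
\end{itemize}
Since $(I/J,+)$ is abelian, this gives $(a*d)*b\in J$.

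Your fallback would not work. One has $b+x-b=\mu_b\bigl(b\cdot\lambda_b^{-1}(x)\cdot b^{-1}\bigr)$, and $\mu_b(J)\subseteq J$ is just a restatement of $J*b\subseteq J$. That route is therefore circular.
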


In an arbitrary skew brace, an ideal which is solvable as a skew brace
in its own right need not be solvable with respect to the ambient skew
brace. Theorem~\ref{intro:theorem-B} shows that this obstruction
disappears in the two-sided case. As a consequence, solvability is
closed under extensions.

\begin{corollary*}\label{intro:extension-solvability}
Let \(B\) be a two-sided skew brace. Then \(B\) is solvable if and only
if there exists an ideal \(I\) of \(B\) such that both \(I\) and \(B/I\)
are solvable.
\end{corollary*}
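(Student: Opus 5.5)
The direction ``$\Rightarrow$'' is immediate: if $B$ is solvable, take $I=B$. Then $I$ is solvable and $B/I$ is the zero skew brace, which is trivially solvable. The content lies in the converse. There the plan is to push the derived series of $B$ first into $I$, using solvability of $B/I$, and then down the derived series of $I$, using Theorem~\ref{intro:theorem-B}.

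\emph{Step 1 (quotient).} Throughout, $\partial^{k}(B)$ denotes the derived series of $B$ computed in $B$, so that $\partial^{k+1}(B)=[\partial^k(B),\partial^k(B)]_B$. Here $[J,K]_B$ is the ideal of $B$ generated by $J*K$, $K*J$ and the additive commutators $[J,K]_+$. Let $\pi\colon B\to B/I$ be the canonical map. I will prove by induction on $k$ that $\pi(\partial^k(B))=\partial^k(B/I)$. The inductive step has two ingredients. First, $\pi$ preserves both operations and the star product, so it maps the generating sets of $[J,J]_B$ onto those of $[\pi(J),\pi(J)]_{B/I}$. Second, the surjective homomorphic image of the ideal generated by a set is the ideal generated by the image of that set. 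Consequently, if $\partial^m(B/I)=0$, then $\partial^m(B)\subseteq I$.

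\emph{Step 2 (descending inside $I$).} Put $J_0=I$ and $J_{k+1}=[J_k,J_k]_{J_k}$. Applying Theorem~\ref{intro:theorem-B} inductively, each $J_k$ is an ideal of $B$. Indeed, $J_k\trianglelefteq B$ and $B$ is two-sided, so the theorem gives $J_{k+1}\trianglelefteq B$.

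I expect the main obstacle to be a bookkeeping point: reconciling $J_k$ with the derived series $\partial_I^k(I)$ of $I$ computed in $I$, where $\partial^{k+1}_I(I)=[\partial_I^k(I),\partial_I^k(I)]_I$. I plan to show by induction that $J_k=\partial_I^k(I)$. For this it suffices that $[J,J]_I=[J,J]_J$ whenever $J=J_k$. The inclusion $[J,J]_J\subseteq[J,J]_I$ holds because the generating sets coincide, and any ideal of $I$ containing them meets $J$ in an ideal of $J$. Conversely, $[J,J]_J$ is an ideal of $B$ by Theorem~\ref{intro:theorem-B}, hence an ideal of $I$, and it contains the generators; this gives $[J,J]_I\subseteq[J,J]_J$. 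Since $I$ is solvable, we obtain $J_n=0$ for some $n$.

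\emph{Step 3 (comparison).} I will show $\partial^{m+k}(B)\subseteq J_k$ by induction on $k$. The base case $k=0$ is Step 1. For the inductive step, suppose $\partial^{m+k}(B)\subseteq J_k$. Then every element of $\partial^{m+k}(B)*\partial^{m+k}(B)$ and every additive commutator of elements of $\partial^{m+k}(B)$ lies among the generators of $[J_k,J_k]_{J_k}=J_{k+1}$. Since $J_{k+1}$ is an ideal of $B$, the ideal of $B$ these elements generate, namely $\partial^{m+k+1}(B)$, is contained in $J_{k+1}$. Taking $k=n$ gives $\partial^{m+n}(B)=0$, so $B$ is solvable.
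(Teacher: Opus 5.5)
Your proof is correct and follows the paper's argument essentially step for step. You push $\partial^m(B)$ into $I$ via the quotient, use Theorem~\ref{intro:theorem-B} to show that the internal derived series of $I$ consists of ideals of $B$ (your inline check that $[J,J]_I=[J,J]_J$ is what the paper packages as Lemma~\ref{lem:commutators-independent-ambient}), and then prove $\partial^{m+k}(B)\subseteq J_k$ by induction; your generating set for $[J,K]_B$ (star products and additive commutators) differs from the paper's, but it generates the same ideal, since an ideal contains either set exactly when, in the quotient, the images of $J$ and $K$ commute additively and multiplicatively and $\cdot$ agrees with $+$ on them.
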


Combining this result with the structural theory of finite two-sided
skew braces developed in \cite{TrappeniersTwoSided}, we obtain the
following equivalence.

\begin{corollary*}\label{intro:group-solvability}
Let \(B\) be a finite two-sided skew brace. Then the following
conditions are equivalent:
\begin{enumerate}[label=\textup{(\roman*)}]
    \item \(B\) is solvable;
    \item \((B,+)\) is solvable;
    \item \((B,\cdot)\) is solvable.
\end{enumerate}
\end{corollary*}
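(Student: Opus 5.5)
The statement to prove is the last corollary: for a finite two-sided skew brace \(B\), the conditions (i) \(B\) solvable, (ii) \((B,+)\) solvable and (iii) \((B,\cdot)\) solvable are equivalent. The plan is to go around the cycle (i)\(\Rightarrow\)(ii), (i)\(\Rightarrow\)(iii), and then (ii)\(\Rightarrow\)(i) and (iii)\(\Rightarrow\)(i), the last two by induction on \(|B|\).

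For (i)\(\Rightarrow\)(ii),(iii) I use the derived series \(\partial^{k}(B)\) of the skew brace. Each factor \(\partial^{k}(B)/\partial^{k+1}(B)\) is a skew brace with trivial commutator \([\,\cdot,\cdot\,]\). Such a skew brace is abelian with equal operations: the commutator ideal contains both the additive commutators and the \(*\)-products. So each factor has abelian additive and multiplicative groups. Since ideals are normal in both groups, the series is a normal series of \((B,+)\) and of \((B,\cdot)\) with abelian factors, and both groups are solvable.

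For the converse I argue by induction on \(|B|\), assuming (ii) or (iii) holds. If \(B\) has a proper non-zero ideal \(I\), then \(I\) and \(B/I\) are two-sided. Their additive and multiplicative groups are subgroups and quotients of those of \(B\), so they are solvable whenever (ii) or (iii) holds for \(B\). By induction \(I\) and \(B/I\) are solvable skew braces, and the extension corollary (the consequence of Theorem~\ref{intro:theorem-B}) gives that \(B\) is solvable.

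It remains to treat \(B\) simple, and this is the main obstacle. Here I would invoke the structure theory of finite two-sided skew braces from \cite{TrappeniersTwoSided}. As I recall it, a finite simple two-sided skew brace is either \(\Triv(C_p)\) or has non-abelian simple (or at least perfect, non-solvable) additive and multiplicative groups. More precisely, every finite two-sided skew brace admits a decomposition in which the non-solvable part lives in perfect additive and multiplicative groups, and the results there force both groups of a simple non-trivial two-sided skew brace to be non-solvable. Granting this, if \((B,+)\) or \((B,\cdot)\) is solvable and \(B\) is simple, then \(B\simeq\Triv(C_p)\), which is solvable. If the precise statement needed is only that a finite two-sided skew brace with solvable additive group has solvable multiplicative group and conversely, I would use it in that form. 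I would then fall back on the fact that the minimal ideals are abelian when one group is solvable: take a minimal normal subgroup \(N\) of \((B,\cdot)\) and show that the two-sided identity makes the ideal it generates elementary abelian.
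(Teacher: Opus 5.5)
Your proposal is correct and is essentially the paper's argument. Condition (i) gives (ii) and (iii) directly, and the converse is an induction on \(|B|\). When \(B\) has a proper non-zero ideal, the extension corollary applies. In the simple case, Trappeniers' classification applies: a finite simple two-sided skew brace is trivial or almost trivial over a simple group \(G\), so both of its groups are isomorphic to \(G\). Your recollection of this is accurate, and the fallback at the end is unnecessary. The one difference is that you run the induction under (ii) or (iii) at once. This gives (iii)\(\Rightarrow\)(i) without Trappeniers' Theorem~4.20, whereas the paper invokes that theorem for (ii)\(\Leftrightarrow\)(iii) and only proves (ii)\(\Rightarrow\)(i).
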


As further consequences, we obtain the following results.

\begin{corollary*}\label{intro:abelian-multiplicative-solvable}
Let \(B\) be a finite skew brace such that \((B,\cdot)\) is abelian.
Then \(B\) is solvable.
\end{corollary*}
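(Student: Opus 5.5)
The statement follows by combining the two-sidedness observation recalled just before Theorem~\ref{intro:theorem-B} with the group-theoretic characterization in the preceding corollary. First, since \((B,\cdot)\) is abelian, \(B\) is two-sided. Indeed, \((a+b)\cdot c = c\cdot(a+b)\), and by the left skew brace identity this equals
\[
c\cdot a - c + c\cdot b = a\cdot c - c + b\cdot c,
\]
again using commutativity of the multiplication. This is exactly the right distributivity law, so \(B\) is a finite two-sided skew brace.

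Next, \((B,\cdot)\) is abelian and therefore solvable. By the preceding corollary, which asserts that for finite two-sided skew braces solvability of \(B\) is equivalent to solvability of \((B,\cdot)\), the implication (iii)\(\Rightarrow\)(i) gives that \(B\) is solvable.

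There is no substantial obstacle at this stage, since all the work is contained in the earlier equivalence. That equivalence rests on Theorem~\ref{intro:theorem-B} and the extension corollary, together with Trappeniers' structure theory, which lets one pass from a solvable multiplicative group to a composition series of \(B\) with solvable factors. The only point to check is the routine verification above that an abelian multiplicative group forces the two-sided identity.
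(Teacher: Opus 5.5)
Your proposal is correct and follows essentially the same route as the paper: the paper's proof also observes that an abelian multiplicative group makes \(B\) two-sided (via exactly your computation \((a+b)\cdot c=c\cdot(a+b)=c\cdot a-c+c\cdot b=a\cdot c-c+b\cdot c\)), and then applies the equivalence for finite two-sided skew braces to conclude that \(B\) is solvable.
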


\begin{corollary*}\label{intro:odd-order-two-sided}
Let \(B\) be a finite two-sided skew brace of odd order. Then \(B\) is
solvable.
\end{corollary*}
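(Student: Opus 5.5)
The plan is to reduce to the previous corollary characterizing solvability of finite two-sided skew braces, whose statement already encodes the extension theorem (Corollary on extension of solvability) together with Trappeniers' structure theory. The main input will be the Feit--Thompson theorem.

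Let \(B\) be a finite two-sided skew brace of odd order. Then \((B,+)\) is a group of odd order, so it is solvable by the Feit--Thompson theorem. By the equivalence (i)\(\Leftrightarrow\)(ii) of the preceding corollary, \(B\) is solvable. Equally well one could argue through \((B,\cdot)\), which also has odd order, and use (i)\(\Leftrightarrow\)(iii).

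Nothing else is needed, so there is essentially no obstacle beyond invoking Feit--Thompson. The real work lies in the preceding corollary. There one uses that \(\partial^1_I(I)\) is an ideal of \(B\) whenever \(I\trianglelefteq B\) (Theorem~\ref{intro:theorem-B}), and hence that solvability passes to extensions. Combined with the classification of finite two-sided skew braces, this reduces solvability of \(B\) to the absence of non-abelian composition factors in either group.

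If one wished to avoid the full strength of (ii)\(\Rightarrow\)(i), one could instead argue by induction on \(|B|\). Take a minimal non-zero ideal \(I\) of \(B\). Then \(B/I\) is solvable by induction, so by the extension corollary it suffices to show that \(I\) is solvable. If \(\partial^1_I(I)\) is proper in \(I\), it is an ideal of \(B\) by Theorem~\ref{intro:theorem-B}, hence zero by minimality, and \(I\) is solvable. Otherwise \(I\) would be a perfect two-sided skew brace of odd order. For such \(I\), Trappeniers' results force non-abelian simple composition factors in its groups, which contradicts Feit--Thompson.
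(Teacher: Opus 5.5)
Your proof is correct and is essentially the paper's own: Feit--Thompson makes \((B,+)\) solvable, and the implication (ii)\(\Rightarrow\)(i) of Corollary~\ref{cor:solvability-equivalences} then gives solvability of \(B\). The inductive argument in your last paragraph is not needed, and its final appeal to Trappeniers' results would need to be made precise.
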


\begin{corollary*}\label{intro:two-prime-order-two-sided}
Let \(B\) be a finite two-sided skew brace such that $|B|=p^nq^m$
for some primes \(p\) and \(q\) and some non-negative integers \(n\)
and \(m\). Then \(B\) is solvable.
\end{corollary*}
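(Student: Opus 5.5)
The plan is to combine the equivalence in the preceding Corollary (solvability of a finite two-sided skew brace is equivalent to solvability of \((B,+)\) or of \((B,\cdot)\)) with Burnside's \(p^aq^b\) theorem. Concretely, let \(B\) be a finite two-sided skew brace with \(|B|=p^nq^m\). Both \((B,+)\) and \((B,\cdot)\) are groups of order \(|B|\), since they live on the same underlying set. By Burnside's \(p^aq^b\) theorem every group of order \(p^nq^m\) is solvable, so in particular \((B,+)\) is solvable. This holds also in the degenerate cases \(n=0\) or \(m=0\), where the group is a \(p\)-group, and when \(B\) is the zero skew brace. Applying the implication (ii)\(\Rightarrow\)(i) of the Corollary on the equivalence of solvability for finite two-sided skew braces, we conclude that \(B\) is solvable.

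The only hypothesis needed for the Corollary is that \(B\) be finite and two-sided, and this is exactly what is assumed. So the argument is a one-line reduction, and there is no genuine obstacle beyond invoking the previous equivalence correctly. One could equally use (iii)\(\Rightarrow\)(i) with the multiplicative group, since \((B,\cdot)\) is also a group of order \(p^nq^m\).

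The real content, namely that solvability of one of the associated groups propagates to the skew brace, is carried entirely by the earlier equivalence. That equivalence rests on Theorem~\ref{intro:theorem-B} and the extension result. If one wanted to avoid quoting it, the fallback would be an induction on \(|B|\). One would use the structure theory of \cite{TrappeniersTwoSided} to produce a proper non-zero ideal \(I\) of \(B\), unless \(B\) is simple. Both \(I\) and \(B/I\) are again two-sided skew braces whose orders have only the primes \(p,q\), so induction and the extension Corollary would finish the argument. The simple case would be handled by noting that a simple two-sided skew brace with solvable additive group must be \(\Triv(C_r)\) for a prime \(r\). This last point is the step I would expect to be hardest to do directly, which is why I would rely on the stated equivalence instead.
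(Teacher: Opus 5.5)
Your proposal is correct and matches the paper's proof: the paper applies Burnside's \(p^aq^b\)-theorem to conclude that \((B,+)\) is solvable, then invokes Corollary~\ref{cor:solvability-equivalences} to deduce that \(B\) is solvable. Your fallback induction sketch simply reproduces how the paper proves that equivalence corollary, so it is not needed here.
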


The equivalence between the solvability of a finite two-sided skew
brace and that of its associated groups relies essentially on the
finiteness assumption. Indeed, Nasybullov \cite[Section~3]{Nasybullov2019} constructed infinite
two-sided skew braces whose additive group is abelian while the
multiplicative group is not solvable. Thus, in the infinite setting,
solvability of the additive group does not in general imply solvability
of the multiplicative group. Nevertheless, a weaker residual form of
this phenomenon still survives: although the multiplicative group
itself may fail to be solvable, all of its finite homomorphic images
remain solvable.

\begin{athm}\label{intro:theorem-C}
Let \(B=(B,+,\cdot)\) be a two-sided skew brace such that \((B,+)\) is
solvable. Then every finite homomorphic image of \((B,\cdot)\) is
solvable.
\end{athm}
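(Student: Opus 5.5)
Let \(N\trianglelefteq(B,\cdot)\) be a normal subgroup of finite index with \((B,\cdot)/N\) finite. The plan is to reduce to the finite two-sided case, Corollary~\ref{intro:group-solvability}, by producing an ideal \(J\) of \(B\) with \(J\subseteq N\) and \(B/J\) finite. Then \(B/J\) is a finite two-sided skew brace, since quotients of two-sided skew braces are two-sided. Its additive group \((B,+)/J\) is a quotient of the solvable group \((B,+)\), hence solvable. By Corollary~\ref{intro:group-solvability}, \((B/J,\cdot)\) is solvable, and \((B,\cdot)/N\) is a quotient of it, hence solvable.

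The key step is the construction of \(J\). In a two-sided skew brace both maps \(\lambda_a(b)=-a+ab\) and \(\rho_a(b)=ba-a\) are additive automorphisms. We have \(\lambda\colon(B,\cdot)\to\Aut(B,+)\) and, by the right identity, an anti-homomorphism \(\rho\). I would first pass from \(N\) to a finite-index subgroup that is also additively controlled. Let \(M=\operatorname{core}_{(B,\cdot)}(N)\), which has finite index. Then consider the \(\lambda\)- and \(\rho\)-stable pieces.

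Concretely, I aim to show the following: if \(M\) is a multiplicative normal subgroup of finite index, then \(M\) contains an ideal of finite index. To build it, take \(K=\bigcap_{a,b\in B}\lambda_a\rho_b(M)\). Since the translates form only finitely many sets modulo structure, one might instead argue via a finite-index \emph{additive} subgroup. The finite set \(B/M\) of left \(M\)-cosets gives an action of \((B,+)\)? Not directly. Instead, I would use the standard trick: \(M\) of finite index in \((B,\cdot)\) means the left cosets \(aM\) are finitely many. For \(m\in M\), \(a+m = a\,\lambda_a^{-1}(m)\). So \((B,+)\) acts on the finite coset space if \(M\) is \(\lambda\)-invariant. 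Hence I first replace \(M\) by \(M_0=\bigcap_a\lambda_a(M)\). Because \(\lambda_a(M)\) only depends on \(a\) modulo something, I need finiteness of this intersection. The tool is the following: \(\lambda_{am}(M)=\lambda_a\lambda_m(M)\), and I would check using two-sidedness that \(\lambda_m(M)=M\) for \(m\in M\) normal. Then only \([B:M]\) conjugates occur, and \(M_0\) is a finite intersection of finite-index multiplicative subgroups; here I use that \(\lambda_a\) carries multiplicative subgroups to multiplicative subgroups in the two-sided setting, via \(\lambda_a(xy)\) expressed using \(\rho\). Symmetrically intersect the \(\rho\)-images. The resulting set is a \(\lambda\)- and \(\rho\)-invariant multiplicatively normal subgroup of finite index. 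By the identities \(a+m=a\lambda_a^{-1}(m)\) and \(m+a=\rho_a^{-1}(m)a\), it is then also an additive normal subgroup, hence an ideal \(J\).

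I expect the main obstacle to be exactly this finiteness-and-closure step: verifying, via the two-sided identities, that the \(\lambda\)- and \(\rho\)-images of a finite-index multiplicative normal subgroup are again multiplicative subgroups and that only finitely many distinct images arise. If this proves awkward, the fallback is to work inside \(\operatorname{Hol}\)-style groups. Embed \(B\) via \(a\mapsto(\lambda_a,\rho_a)\)-type data into the group \((B,+)\rtimes(B,\cdot)\), take the finite-index core there, and pull it back to obtain an ideal. Once \(J\) exists, the reduction to Corollary~\ref{intro:group-solvability} in the first paragraph finishes the proof.
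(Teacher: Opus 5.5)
There is a genuine gap, and it sits where you expected. The reduction in your first paragraph is fine once \(J\) exists. The problem is that the key claim behind it is false: a finite-index normal subgroup of \((B,\cdot)\) need not contain an ideal of \(B\) of finite index.

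Take the commutative Jacobson radical ring \(2\mathbb{Z}_{(2)}\) of fractions \(2a/b\) with \(b\) odd, and let \(B\) be the associated two-sided brace, with \(r\cdot s=r+s+rs\). Then \(\lambda_a(r)=(1+a)r\), and \(r\mapsto 1+r\) identifies \((B,\cdot)\) with the multiplicative group of fractions \(a/b\) with \(a,b\) odd. Write \(v_3\) for the \(3\)-adic valuation. The map \(r\mapsto v_3(1+r)\bmod 2\) is a homomorphism of \((B,\cdot)\) onto \(\mathbb{Z}/2\); let \(N\) be its kernel. Let \(J\) be any subgroup of \((B,+)\) of finite index \(n=2^ku\) with \(u\) odd. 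Since \(u\) is a unit of \(\mathbb{Z}_{(2)}\), we get \(J\supseteq nB=2^kB=2^{k+1}\mathbb{Z}_{(2)}\). But \(r=2^{k+1}/3\) lies in \(2^{k+1}\mathbb{Z}_{(2)}\), and \(1+r=(3+2^{k+1})/3\) has \(v_3(1+r)=-1\), so \(r\notin N\). The theorem itself holds trivially here, since \((B,\cdot)\) is abelian. Nevertheless \(N\) contains no finite-index ideal, indeed no finite-index additive subgroup at all. So no construction of \(J\) can succeed, whether by intersecting \(\lambda\)- and \(\rho\)-images or by taking a core inside a holomorph-type group.

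Your auxiliary steps fail in this example as well:
\begin{itemize}
\item We have \(8,\,2/9\in N\) but \(\lambda_8(2/9)=2\notin N\), so \(\lambda_m(M)=M\) for \(m\in M\) is false.
\item We have \(-8=\lambda_8(-8/9)\in\lambda_8(N)\) but \((-8)\cdot(-8)=48\notin\lambda_8(N)\), because \(48=\lambda_8(16/3)\) and \(16/3\notin N\). So \(\lambda_a\) does not carry multiplicative subgroups to multiplicative subgroups.
\end{itemize}

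The underlying issue is that finite quotients of \((B,\cdot)\) need not factor through any finite quotient skew brace of \(B\). Hence Corollary~\ref{cor:solvability-equivalences} cannot be brought to bear, and some genuinely infinite input is unavoidable.

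The paper supplies this input when \((B,+)\) is abelian. Then \(B\) comes from a Jacobson radical ring, whose adjoint group is an \(SN\)-group (Lemma~\ref{lem:multiplicative-SN}), so all its finite quotients are solvable.

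The general case is an induction on the derived length of \((B,+)\). The subgroup \(D=(B,+)'\) is characteristic, hence an ideal by Proposition~\ref{prop:characteristic-additive-ideal}. Take \(\pi\colon(B,\cdot)\to F=(B,\cdot)/N\) with \(F\) finite. Then \(\pi(D)\) is a finite quotient of \((D,\cdot)\), solvable by induction. Also \(F/\pi(D)\) is a finite quotient of \((B/D,\cdot)\), solvable by the base case. This argument works directly with finite quotients of the multiplicative groups of \(D\) and \(B/D\) and never needs a finite quotient brace, which is exactly what your approach cannot avoid.
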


The paper is organized as follows. In Section~\ref{sec: Preliminaries}, we recall the basic
definitions and preliminary results concerning skew braces, ideals,
commutators and solvability. In Section~\ref{sec: Theorem A}, we prove
Theorem~\ref{intro:theorem-A} and derive its consequences for simple
skew braces and ideals of prime index. We also provide an example
showing that nilpotency of the multiplicative group does not imply left
nilpotency. In Section~\ref{sec: Solvability two sided}, we study the solvability of
two-sided skew braces, both in the finite and in the general setting. We
prove Theorems~\ref{intro:theorem-B} and~\ref{intro:theorem-C}, establish
the extension property for solvability, and, in the finite case, derive
the equivalence between the solvability of a two-sided skew brace and
that of its additive and multiplicative groups.

\section{Preliminaries} \label{sec: Preliminaries}

We recall some basic definitions and properties of skew braces that
will be used throughout the paper. Let $(B,+,\cdot)$ be a skew brace as in the introduction.
The identity elements of $(B,+)$ and $(B,\cdot)$ coincide and will be denoted by
\(0\). For every \(a\in B\), define $\lambda_a:B\longrightarrow B$ by 
$\lambda_a(b)=-a+a\cdot b.$
The skew brace identity implies that $\lambda_a\in\operatorname{Aut}(B,+)$
for every \(a\in B\), and that the map $\lambda:(B,\cdot)\longrightarrow\operatorname{Aut}(B,+)$ given by $\lambda(a)=\lambda_{a}$
is a group homomorphism. Moreover, the lambda map allows one to express each of the two group
operations in terms of the other. Indeed, for all \(a,b\in B\),
\[
a\cdot b=a+\lambda_a(b)
\qquad\text{and}\qquad
a+b=a\cdot\lambda_a^{-1}(b).
\]
It is also convenient to introduce the operation
$a*b=\lambda_a(b)-b$
for all \(a,b\in B\). If \(H\) and \(K\) are subsets of \(B\), we denote by $H*K=\bigl\langle h*k \mid h\in H,\ k\in K\bigr\rangle_+$
the additive subgroup of \((B,+)\) generated by all elements \(h*k\), with
\(h\in H\) and \(k\in K\). We write $H^{2}$ for $H*H$. A skew brace \(B=(B,+,\cdot)\) is called \emph{trivial} if its two
operations coincide, that is, $a\cdot b=a+b$
for all \(a,b\in B\). Equivalently $B*B=0.$
For a group \(G\), we denote by \(\Triv(G)\) the trivial skew brace
whose additive and multiplicative groups are both equal to \(G\). A skew brace \(B\) is called \emph{abelian} if it is trivial and its
additive group is abelian. Equivalently, both associated groups are
abelian and the two operations coincide. The \emph{left series} of a skew brace \(B\) is defined recursively by
\[
B^1=B
\qquad\text{and}\qquad
B^{n+1}=B*B^n
\]
for every \(n\geq 1\). We say that \(B\) is \emph{left nilpotent} if
\(B^m=0\) for some positive integer \(m\).
A subset \(C\subseteq B\) is called a \emph{subbrace} of \(B\) if $(C,+)\leq(B,+)$ and $(C,\cdot)\leq(B,\cdot).$ 
A subgroup \(I\leq(B,+)\) is called a \emph{left ideal} of \(B\) if it
is invariant under the lambda action, that is,
$\lambda_b(I)=I$
for every \(b\in B\). We record the following useful observation.

\begin{Proposition}\label{lem:characteristic-left-ideal}
Let \(B\) be a skew brace and let
\(H\) be a left ideal of \(B\). Then $H\leq(B,\cdot).$
\end{Proposition}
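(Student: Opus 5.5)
The plan is to check directly that \(H\) contains \(0\) and is closed under the multiplication and under multiplicative inverses. Throughout I use only two facts: the identities \(a\cdot b=a+\lambda_a(b)\) and \(\lambda_a\in\Aut(B,+)\), and the left-ideal hypothesis \(\lambda_b(H)=H\) for every \(b\in B\).

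\emph{Identity and closure under products.} First, \(0\in H\), since \(H\) is an additive subgroup and the two identities coincide. Next, take \(a,b\in H\). Then \(\lambda_a(b)\in H\), because \(H\) is \(\lambda\)-invariant. Since \(H\) is an additive subgroup, it follows that \(a\cdot b=a+\lambda_a(b)\in H\).

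\emph{Closure under inverses.} Let \(\bar a\) denote the multiplicative inverse of \(a\in H\). From \(0=a\cdot\bar a=a+\lambda_a(\bar a)\) we get \(\lambda_a(\bar a)=-a\). Applying \(\lambda_a^{-1}=\lambda_{\bar a}\) gives
\[
\bar a=\lambda_a^{-1}(-a)=-\lambda_{\bar a}(a).
\]
The hypothesis gives \(\lambda_b(H)=H\) for all \(b\in B\), in particular for \(b=\bar a\). So \(\lambda_{\bar a}(a)\in H\), and hence \(\bar a\in H\).

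The only point that needs care is this last step. Invariance is assumed under all \(\lambda_b\) with \(b\in B\), not just \(b\in H\), so we may apply it to \(\lambda_{\bar a}\) before knowing that \(\bar a\) lies in \(H\). Even with only the inclusion \(\lambda_b(H)\subseteq H\) for all \(b\), the argument still goes through, because \(\lambda_a^{-1}=\lambda_{\bar a}\) is itself one of the maps covered by the hypothesis.
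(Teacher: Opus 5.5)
Your proof is correct and matches the paper's: closure under products from \(a\cdot b=a+\lambda_a(b)\), and closure under inverses from \(\bar a=\lambda_a^{-1}(-a)\) together with \(\lambda\)-invariance of \(H\). Rewriting \(\lambda_a^{-1}\) as \(\lambda_{\bar a}\) is harmless but not needed, since \(\lambda_a(H)=H\) already gives \(\lambda_a^{-1}(H)=H\).
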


\begin{proof}
If \(h,k\in H\), then $h\cdot k=h+\lambda_h(k)\in H$ and  $h_{\cdot}^{-1}=\lambda_h^{-1}(-h)\in H$ and thus \(H\leq(B,\cdot)\).
\end{proof}

A subset \(I\subseteq B\) is called an \emph{ideal} of \(B\), and we
write \(I\trianglelefteq B\), if \(I\) is a left ideal which is normal
in both the additive and multiplicative groups. We shall also use the
well-known fact that \(B*B\) is an ideal of \(B\).

\begin{Proposition}\label{lem:left-ideal-criterion} Let \(B\) be a skew brace and let \(I\) be a left ideal of \(B\) such that \(I\trianglelefteq(B,+)\). Then \(I\) is an ideal of \(B\) if and only if $I*B\leq I.$
\end{Proposition}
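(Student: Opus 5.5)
We must show that if $I$ is a left ideal with $I\trianglelefteq(B,+)$, then $I$ is normal in $(B,\cdot)$ if and only if $I*B\leq I$. The plan is to express the multiplicative conjugate $b\cdot a\cdot b^{-1}$, or equivalently the condition $a\cdot b\in b\cdot I$, in additive terms using the lambda map. The identity $a\cdot b=a+\lambda_a(b)$ converts between the two operations, and both left-ideal invariance and additive normality of $I$ will be used.

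Fix $a\in I$ and $b\in B$. We compute
\[
a\cdot b=a+\lambda_a(b)=a+\bigl(a*b\bigr)+b.
\]
Since $I\trianglelefteq(B,+)$, the element $a+(a*b)+b$ lies in the additive coset $I+b=b+I$ exactly when $a+(a*b)\in I$, that is, exactly when $a*b\in I$, because $a\in I$.

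Next we compare additive and multiplicative cosets of $b$. For $x\in I$ we have $b\cdot x=b+\lambda_b(x)$, and $\lambda_b(I)=I$, so
\[
b\cdot I=b+I.
\]
Combining this with the previous step, for fixed $b$ we get: $a\cdot b\in b\cdot I$ for all $a\in I$ if and only if $a*b\in I$ for all $a\in I$.

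Now suppose $I*B\leq I$. Then $I\cdot b\subseteq b\cdot I$ for every $b$. Since $I$ is a subgroup of $(B,\cdot)$ by Proposition~\ref{lem:characteristic-left-ideal}, this gives $b^{-1}\cdot I\cdot b\subseteq I$ for all $b$. Applying the same inclusion with $b^{-1}$ in place of $b$ yields equality, so $I\trianglelefteq(B,\cdot)$. For the converse, multiplicative normality gives $a\cdot b\in I\cdot b=b\cdot I=b+I$, and by the first computation $a*b\in I$. Hence every generator $a*b$ of $I*B$ lies in $I$, and therefore $I*B\leq I$.

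The only delicate point is checking the order of terms in the noncommutative additive group in the first step. Additive normality of $I$ is exactly what lets us pass between $I+b$ and $b+I$ there.
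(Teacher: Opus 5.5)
Your proof is correct and follows the same route as the paper. You write $a\cdot b=a+(a*b)+b$, use $\lambda_b(I)=I$ together with additive normality to get $b\cdot I=b+I=I+b$, and read off that $a\cdot b\in b\cdot I$ if and only if $a*b\in I$.

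The one divergence is how $I\cdot b\subseteq b\cdot I$ is upgraded to multiplicative normality. The paper argues that both sets have cardinality $|I|$, which settles the question only when $I$ is finite. Your step of applying the inclusion to $b^{-1}$ works for arbitrary, possibly infinite, skew braces, which is the generality the statement claims.
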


\begin{proof}
Suppose first that \(I\) is an ideal of \(B\). Let \(i\in I\) and
\(b\in B\). Since \(I\trianglelefteq(B,\cdot)\), we have
\(i\cdot b\in b\cdot I\). Moreover, since \(I\) is a left ideal we have $b\cdot I=b+\lambda_b(I)=b+I=I+b,$
where the last equality follows from \(I\trianglelefteq(B,+)\).
Hence $i+\lambda_i(b)=i\cdot b\in I+b.$
Since \(i\in I\), it follows that \(\lambda_i(b)\in I+b\), and therefore $i*b=\lambda_i(b)-b\in I.$
Thus \(I*B\leq I\). Conversely, suppose that \(I*B\leq I\). For \(i\in I\) and \(b\in B\),
we have \(\lambda_i(b)=i*b+b\in I+b\), and hence
\[
i\cdot b=i+\lambda_i(b)\in I+b=b+I=b\cdot I.
\]
Therefore \(I\cdot b\subseteq b\cdot I\). Since both sets have
cardinality \(|I|\), we obtain \(I\cdot b=b\cdot I\) for every
\(b\in B\). Thus \(I\trianglelefteq(B,\cdot)\), and hence \(I\) is an
ideal of \(B\).
\end{proof}

If \(I\) is an ideal of \(B\), then the quotient set \(B/I\) inherits
a skew brace structure defined by
\[
(a+I)+(b+I)=a+b+I
\qquad\text{and}\qquad
(a+I)\cdot(b+I)=a\cdot b+I.
\]
A skew brace \(B\) is called \emph{simple} if $|B|>1$ and its only ideals are \(0\) and \(B\).  Let \(I,J\) be ideals of \(B\). Recall that the commutator
\([I,J]_B\) is the ideal of \(B\) generated by
\[
[I,J]_+\cup [I,J]_\cdot
\cup
\{\,i\cdot j-(i+j)\mid i\in I,\ j\in J\,\}.
\]

For an ideal \(I\trianglelefteq B\), define its derived series with
respect to \(B\) by
\[
\partial_B^0(I)=I,
\qquad
\partial_B^{n+1}(I)
=
[\partial_B^n(I),\partial_B^n(I)]_B
\]
for every \(n\geq 0\). We say that \(I\) is \emph{\(B\)-solvable}, or
\emph{solvable with respect to \(B\)}, if
$\partial_B^n(I)=0$
for some \(n\geq 0\). When \(I=B\), we simply write $\partial^n(B)=\partial_B^n(B).$
In particular, $\partial(B)=\partial^1(B)=[B,B]_B.$
We say that \(B\) is \emph{solvable} if $\partial^n(B)=0$
for some \(n\geq 0\).

\section{Proof of Theorem \ref{intro:theorem-A}} \label{sec: Theorem A}

We begin this section with the following
group-theoretic fixed-point lemma, which will be used to control the
action induced by \(p\)-elements on a suitable quotient of the additive
group.

\begin{lemma}\label{lem:fixed-point}
Let \(G\) be a finite solvable group such that \(O_p(G)=1\), and let
\(\alpha\in\Aut(G)\) have \(p\)-power order. Suppose that \(\alpha\)
fixes elementwise a Hall \(p'\)-subgroup of \(G\). Then \(\alpha=1\).
\end{lemma}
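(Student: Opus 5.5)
Since \(O_p(G)=1\) and \(G\) is solvable, the Fitting subgroup \(F=F(G)\) is a \(p'\)-group, and by the Hall--Higman lemma \(C_G(F)\le F\). I plan to show that \(\alpha\) centralizes \(F\) and then lift this to all of \(G\).

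Let \(H\) be the Hall \(p'\)-subgroup fixed elementwise by \(\alpha\). Because \(F\) is a normal \(p'\)-subgroup, it is contained in every Hall \(p'\)-subgroup of the solvable group \(G\), in particular in \(H\). Hence \(\alpha\) acts trivially on \(F\).

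For the lifting step, consider \(A=\langle\alpha\rangle\), a \(p\)-group, and the semidirect product \(\Gamma=G\rtimes A\). For \(g\in G\) and \(x\in F\), we have \(\alpha(gxg^{-1})=\alpha(g)x\alpha(g)^{-1}\) and also \(\alpha(gxg^{-1})=gxg^{-1}\), since \(gxg^{-1}\in F\). Therefore \(g^{-1}\alpha(g)\in C_G(F)\le F\) for every \(g\in G\). The map \(g\mapsto g^{-1}\alpha(g)\) is thus a function \(G\to F\) that is constant on right cosets of \(H\) in the appropriate sense, since \(\alpha(gh)=\alpha(g)h\). It remains to show it is trivial.

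Here is the main obstacle. One route is to note that \([G,\alpha]\le F\) and that \(\alpha\) acts trivially on both \(G/F\) and \(F\). An automorphism acting trivially on a normal subgroup \(N\) and on \(G/N\) has order dividing the exponent-type invariants of \(Z(N)\). More precisely, such automorphisms correspond to derivations \(G/N\to Z(N)\), forming a group embedded in \(Z^1(G/N,Z(N))\). That is a \(p'\)-group-related object, because \(Z(N)\le F\) is a \(p'\)-group: every element of \(Z^1(G/N,Z(N))\) has order dividing \(\exp Z(N)\), which is prime to \(p\). Since \(\alpha\) has \(p\)-power order and lies in this group of \(p'\)-order, \(\alpha=1\).

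To make this rigorous, I would verify two facts:
\begin{enumerate}
\item The map \(\delta(g)=g^{-1}\alpha(g)\) lands in \(Z(F)\). We have \(\delta(g)\in C_G(F)\), so \(\delta(g)\in C_G(F)\cap F=Z(F)\).
\item \(\delta\) is a crossed homomorphism, and \(\alpha\mapsto\delta\) is an injective homomorphism from the group of such automorphisms into an abelian \(p'\)-group.
\end{enumerate}
Once these hold, \(\alpha^{p^k}=1\) together with the \(p'\)-order forces \(\alpha=1\). In this formulation the hypothesis that \(\alpha\) fixes \(H\) is used only to get \(\alpha|_F=1\).
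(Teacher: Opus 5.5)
Your proof is correct and follows essentially the same route as the paper: $F(G)$ is a $p'$-group contained in $H$, so $\alpha$ centralizes $F$; then $g^{-1}\alpha(g)\in C_G(F)\le F$ for all $g$; and the $p$-power order of $\alpha$ kills these elements because $F$ is a $p'$-group. The differences are only in packaging: your two-way computation of $\alpha(gxg^{-1})$ replaces the Three Subgroups Lemma, and the multiplicativity $\delta_{\alpha^n}=\delta_\alpha^n$ of your embedding into $Z^1(G/F,Z(F))$ is exactly the paper's identity $[g,\alpha^n]=[g,\alpha]^n$. (Your aside that $\delta$ is constant on cosets of $H$ is not literally true, since $\delta(gh)=h^{-1}\delta(g)h$, but it is never used.)
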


\begin{proof}
Let \(F=F(G)\), and consider the semidirect product $\Gamma=G\rtimes\langle\alpha\rangle.$
Since \(F\) is nilpotent, its Sylow \(p\)-subgroup is characteristic in
\(F\), and hence normal in \(G\). Since \(O_p(G)=1\), it follows that
\(F\) is a \(p'\)-group. Let \(H\) be a Hall \(p'\)-subgroup of \(G\) fixed elementwise by
\(\alpha\). Since \(F\trianglelefteq G\) is a \(p'\)-group, we have
\(F\leq H\). Therefore $[F,\alpha]=1.$
Moreover, since \(F\trianglelefteq G\), we have \([G,F]\leq F\), and
hence $[G,F,\alpha]\leq [F,\alpha]=1.$ We also have $[F,\alpha,G]=1.$
By the Three Subgroups Lemma
\cite[Corollary~4.10]{Isaacs}, it follows that
$[\alpha,G,F]=1.$
Thus $[\alpha,G]\leq C_G(F).$
Since \(G\) is solvable, its Fitting subgroup is self-centralizing; more
precisely, $C_G(F)\leq F;$
see \cite[Problem~3B.14]{Isaacs}. Consequently,
$[G,\alpha]\leq F.$
As \([F,\alpha]=1\), we obtain $[G,\alpha,\alpha]=1.$
For every \(g\in G\) and every \(n\geq 0\), the standard commutator
identity gives
\[
[g,\alpha^{n+1}]
=
[g,\alpha^n]\,[g,\alpha]^{\alpha^n}.
\]
Since \([g,\alpha,\alpha]=1\), the element \([g,\alpha]\) is fixed by
\(\alpha\). Hence
\[
[g,\alpha]^{\alpha^i}=[g,\alpha]
\]
for every \(i\geq 0\), and therefore
\[
[g,\alpha^n]
=
[g,\alpha]\,[g,\alpha]^\alpha\cdots
[g,\alpha]^{\alpha^{n-1}}
=
[g,\alpha]^n.
\]

Let \(|\alpha|=p^m\). Then, for every \(g\in G\),
\[
1=[g,\alpha^{p^m}]=[g,\alpha]^{p^m}.
\]
On the other hand,
\[
[g,\alpha]\in [G,\alpha]\leq F,
\]
and \(F\) is a \(p'\)-group. It follows that \([g,\alpha]\) has both
\(p\)-power order and order coprime to \(p\), and hence $[g,\alpha]=1.$
Thus \([G,\alpha]=1\), so \(\alpha\) fixes every element of \(G\).
Therefore \(\alpha=1\).
\end{proof}

We are now ready to prove Theorem~\ref{intro:theorem-A}.

\begin{proof}[Proof of Theorem \ref{intro:theorem-A}]
Since \((B,\cdot)\) is nilpotent, the additive group \((B,+)\) is
solvable by \cite[Theorem~1.3(c)]{TsangQinSolvability}. Therefore $F(B,+) \ne 1$. Let \(p\) be a prime divisor of \(|F(B,+)|\), and set $Q=O_p(B,+).$
We prove that \(Q\) is an ideal of \(B\).
Since \(Q\) is characteristic in \((B,+)\), we have $\lambda_a(Q)=Q$
for every \(a\in B\). 
Thus, by Proposition \ref{lem:characteristic-left-ideal}, \(Q\) is also a \(p\)-subgroup of \((B,\cdot)\). We are left to prove that $Q$ is normal in $(B,\cdot)$. Let \(P\in\operatorname{Syl}_p(B,\cdot)\). Since \((B,\cdot)\) is
nilpotent, we have $(B,\cdot)=P\times H,$
where $H=O_{p'}(B,\cdot)$
is the unique Hall \(p'\)-subgroup of \((B,\cdot)\). Since \(Q\) is a
multiplicative \(p\)-subgroup, we have $Q\leq P.$  Since both \((B,+)\) and \((B,\cdot)\) are solvable, the Hall theorem for
finite skew braces \cite[Theorem~2.8]{TrumanSylow} ensures that $H=O_{p'}(B,\cdot)$
is the multiplicative group of a Hall \(p'\)-subbrace of \(B\). In
particular, \((H,+)\) is a Hall \(p'\)-subgroup of \((B,+)\). Let \(x\in Q\) and \(h\in H\). Since \(x\in P\) and $(B,\cdot)=P\times H,$ we have $x\cdot h=h\cdot x$. Thus we obtain

\[
\begin{aligned}
\lambda_x(h)-h
   &=(-x+x\cdot h)-h\\
   &=(-x+h\cdot x)-h\\
   &=-x+(h\cdot x-h)\\
   &=-x+\bigl(h+\lambda_h(x)-h\bigr).
\end{aligned}
\]
Since \(x,\lambda_h(x)\in Q\) and \(Q\trianglelefteq(B,+)\), we have $h+\lambda_h(x)-h\in Q,$
and hence \(\lambda_x(h)-h\in Q\).
Consider now the quotient group $(B,+)/Q.$
Since \(Q=O_p(B,+)\), we have $O_p\bigl((B,+)/Q\bigr)=1.$
For each \(x\in Q\), define
\[
\overline{\lambda_x}\colon (B,+)/Q\longrightarrow (B,+)/Q,
\qquad
\overline{\lambda_x}(b+Q)=\lambda_x(b)+Q.
\]
Since \(Q\) is invariant under \(\lambda_x\), this map is well defined and
belongs to \(\operatorname{Aut}((B,+)/Q)\).
The automorphism \(\overline{\lambda_x}\) has \(p\)-power order. Indeed,
since \(Q\) is a \(p\)-subgroup of \((B,\cdot)\), the element \(x\) has
\(p\)-power order in \((B,\cdot)\). As
$\lambda\colon (B,\cdot)\longrightarrow \operatorname{Aut}(B,+)$
is a group homomorphism, the order of \(\lambda_x\) divides the order of
\(x\), and is therefore a power of \(p\). Finally,
\(\overline{\lambda_x}\) is the automorphism induced by \(\lambda_x\) on
\((B,+)/Q\), so its order divides the order of \(\lambda_x\). Hence
\(\overline{\lambda_x}\) also has \(p\)-power order. Observe that $(H+Q)/Q$
is a Hall \(p'\)-subgroup of \((B,+)/Q\)
, and the previous calculation
shows that
\[
\overline{\lambda_x}(h+Q)=h+Q
\]
for every \(h\in H\). Thus \(\overline{\lambda_x}\) fixes a Hall
\(p'\)-subgroup of \((B,+)/Q\) elementwise. By
Lemma~\ref{lem:fixed-point} we conclude that $\overline{\lambda_x}=1.$
Therefore $\lambda_x(b)-b\in Q$
for every \(x\in Q\) and \(b\in B\). Equivalently $Q*B\leq Q.$ By Proposition \ref{lem:left-ideal-criterion} we conclude that $Q$ is an ideal of $B$.
Since
\[
F(B,+)=\prod_{q\mid |F(B,+)|} O_q(B,+)
\]
is the direct product of the ideals \(O_q(B,+)\), it follows that
\(F(B,+)\) is a non-zero ideal of \(B\).
\end{proof}

\begin{corollary} \label{cor: simple-multiplicative-nilpotent}
    Let $B$ be a finite simple skew brace such that \((B,\cdot)\) is
nilpotent. Thus $B \simeq  \Triv(C_p)$.
\end{corollary}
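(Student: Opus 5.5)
The plan is to apply Theorem~\ref{intro:theorem-A} to the simple skew brace \(B\) and then analyse the resulting additive structure.

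\emph{Step 1: \((B,+)\) is a \(p\)-group.} Since \(B\) is simple, \(|B|>1\), so \(B\) is non-zero and Theorem~\ref{intro:theorem-A} applies. Thus \(F(B,+)\) is a non-zero ideal, and simplicity forces \(F(B,+)=B\); that is, \((B,+)\) is nilpotent. The proof of Theorem~\ref{intro:theorem-A} shows more: for each prime \(p\) dividing \(|F(B,+)|\), the subgroup \(O_p(B,+)\) is itself a non-zero ideal. Choosing such a \(p\), simplicity gives \(O_p(B,+)=B\), so \(|B|=p^n\) with \(n\geq 1\).

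\emph{Step 2: \(B\) is trivial and \(|B|=p\).} Since \(B\) is a finite skew brace of \(p\)-power order, I would use the standard fact that \(B*B\) is then a proper subgroup of \(B\). I expect this to be the main obstacle, since it must be cited rather than derived from results already in this paper. I would try either of two arguments:
\begin{enumerate}[label=\textup{(\alph*)}]
\item the known result that skew braces of prime-power order are left nilpotent (via \(\lambda\)-action of a \(p\)-group on a \(p\)-group), which gives \(B^2=B*B\neq B\);
\item a direct orbit argument: \((B,\cdot)\) is a \(p\)-group acting via \(\lambda\) on \((B,+)\) and normalising the additive commutator structure, so \([B,B]_+ + (B*B)\) should be a proper ideal.
\end{enumerate}
Since \(B*B\) is an ideal, simplicity then yields \(B*B=0\), so \(B\) is trivial. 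For a trivial skew brace, ideals are exactly the normal subgroups of \((B,+)\). A non-trivial \(p\)-group \(G\) has normal subgroups of every order dividing \(|G|\), so simplicity forces \(|B|=p\). Hence \(B\simeq\Triv(C_p)\).

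An alternative for Step 2 that avoids left nilpotency is to invoke Corollary~\ref{intro:prime-index}, once it is established from Theorem~\ref{intro:theorem-A} and Step 1. An ideal of index \(p\) must be \(0\) by simplicity, so \(|B|=p\). Then \((B,+)\simeq C_p\simeq(B,\cdot)\), \(\lambda\) maps a group of order \(p\) into \(\Aut(C_p)\) of order \(p-1\), so \(\lambda\) is trivial and \(B\simeq\Triv(C_p)\). This second route is cleaner, but it only avoids circularity if the prime-index corollary is proved independently. I will therefore rely on the prime-power left nilpotency route as primary, with the final \(\lambda\)-triviality argument reused.
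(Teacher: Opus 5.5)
Your proposal is correct and follows the paper's proof. You use Theorem~A and simplicity to reduce to $p$-power order. Then left nilpotency of finite skew braces of prime-power order (Ced\'o--Smoktunowicz--Vendramin) together with simplicity forces $B*B=0$, so $B$ is trivial and hence $B\simeq\Triv(C_p)$.

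The only minor difference is in reaching $|B|=p^n$. You invoke directly the fact, established inside the proof of Theorem~A, that $O_p(B,+)$ is a non-zero ideal. The paper instead first deduces that $(B,+)$ is nilpotent and then checks that its Sylow $p$-subgroup is an ideal using nilpotency of $(B,\cdot)$. You were also right to avoid the prime-index corollary as your primary route, since the paper derives it from this statement.
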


\begin{proof}
By Theorem~\ref{intro:theorem-A}, the Fitting subgroup \(F(B,+)\) is
a non-zero ideal of \(B\). As \(B\) is
simple, it follows that $F(B,+)=B.$
Hence \((B,+)\) is nilpotent.
Let \(p\) be a prime divisor of \(|B|\), and let
\(P\in\operatorname{Syl}_p(B,+)\). Since \((B,+)\) is nilpotent, \(P\)
is the unique Sylow \(p\)-subgroup of \((B,+)\), and hence it is
characteristic in \((B,+)\). Therefore \(P\) is invariant under the
lambda action, so \(P\) is a left ideal of \(B\). In particular, by Proposition \ref{lem:characteristic-left-ideal}, \(P\)
is also a multiplicative subgroup of \(B\). Since \(|P|\) is the full
\(p\)-part of \(|B|\), it follows that $P\in\operatorname{Syl}_p(B,\cdot).$
As \((B,\cdot)\) is nilpotent, \(P\trianglelefteq(B,\cdot)\). Moreover,
\(P\trianglelefteq(B,+)\), since \(P\) is characteristic in \((B,+)\).
Thus \(P\) is an ideal of \(B\). By the simplicity of \(B\), we obtain
\(P=B\), and consequently \(|B|\) is a power of \(p\). Thus \(B\) is a finite simple skew brace of \(p\)-power order. By
\cite[Proposition~4.4]{CedoSmoktunowiczVendramin2019}, every finite skew
brace of \(p\)-power order is left nilpotent. Hence the descending
left series of \(B\) terminates at zero. Since \(B\) is simple, the
ideal \(B*B\) is either \(0\) or \(B\). The latter possibility is
excluded by left nilpotency, and therefore $B*B=0.$
Thus \(B\) is a trivial skew brace. Since \(B\) is simple, its additive
group is a simple \(p\)-group, and hence isomorphic to \(C_p\).
Consequently $B\simeq \Triv(C_p).$
\end{proof}

\begin{corollary} \label{ideal index p}
Let \(B\) be a finite skew brace such that \((B,\cdot)\) is nilpotent.
Then \(B\) admits an ideal of index \(p\) for some prime \(p\).
\end{corollary}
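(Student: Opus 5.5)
The proof goes by strong induction on \(|B|\). If \(|B|=1\) there is nothing to prove, so we read the statement for non-zero \(B\). Since \(B\) is finite, it has a maximal ideal \(M\), that is, a proper ideal maximal among proper ideals; it exists because \(0\) is a proper ideal. The quotient \(B/M\) is then a simple skew brace. This uses the correspondence between ideals of \(B/M\) and ideals of \(B\) containing \(M\), which holds because preimages of ideals under the canonical projection are ideals.

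The multiplicative group of \(B/M\) is \((B,\cdot)/M\), a quotient of a nilpotent group, hence nilpotent. Corollary~\ref{cor: simple-multiplicative-nilpotent} therefore gives \(B/M\simeq\Triv(C_p)\) for some prime \(p\). In particular \(|B/M|=p\), so \(M\) is an ideal of index \(p\) in \(B\).

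The argument is short, and induction is not really needed: a single maximal ideal suffices. The main point to check is that \(B/M\) is simple in the paper's sense, namely that \(|B/M|>1\) and that its only ideals are \(0\) and \(B/M\). This follows directly from the ideal correspondence. One should verify that the preimage of an ideal \(\bar J\) of \(B/M\) is a left ideal normal in both groups. This is routine, since \(\lambda_b\) induces \(\lambda_{b+M}\) on the quotient, and normality in each group pulls back along the group epimorphisms \((B,+)\to(B/M,+)\) and \((B,\cdot)\to(B/M,\cdot)\).
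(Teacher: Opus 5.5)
Your proposal is correct and is essentially the paper's proof: take a maximal proper ideal $M$, observe that $B/M$ is simple with nilpotent multiplicative group $(B,\cdot)/M$, and apply Corollary~\ref{cor: simple-multiplicative-nilpotent} to get $B/M\simeq\Triv(C_p)$, hence index $p$. As you note yourself, the induction framing is unnecessary, and restricting to non-zero $B$ is the right reading; the paper leaves that restriction implicit.
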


\begin{proof}
Let \(I\) be a maximal proper ideal of \(B\). Then \(B/I\) is a finite
simple skew brace. Moreover, its multiplicative group $(B/I,\cdot)\cong (B,\cdot)/(I,\cdot)$
is nilpotent, being a quotient of the nilpotent group \((B,\cdot)\).
Hence, by Corollary~\ref{cor: simple-multiplicative-nilpotent}, there
exists a prime \(p\) such that $B/I\cong \Triv(C_p).$
Therefore $[B:I]=|B/I|=p,$
and thus \(I\) is an ideal of index \(p\) in \(B\).
\end{proof}

\begin{corollary}
Let \(B\) be a finite skew brace such that \((B,\cdot)\) is nilpotent.
Then $B*B\neq B$ and $\partial(B)\neq B.$
\end{corollary}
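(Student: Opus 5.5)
We reduce the statement to Corollary~\ref{ideal index p}. If \(B=0\) the statement is degenerate, so assume \(B\) is non-zero. By Corollary~\ref{ideal index p} there is an ideal \(I\trianglelefteq B\) with \([B:I]=p\) for some prime \(p\). The proof of that corollary gives more: \(I\) can be taken to be a maximal proper ideal with \(B/I\cong\Triv(C_p)\). So \(B/I\) is a trivial skew brace with abelian additive (and multiplicative) group, i.e.\ an abelian skew brace.

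\textbf{Step 1: \(B*B\neq B\).} Consider the canonical map \(\pi\colon B\to B/I\), which is a skew brace homomorphism. Since \(\lambda\) is computed from both operations, \(\pi(\lambda_a(b))=\lambda_{\pi(a)}(\pi(b))\), and hence \(\pi(a*b)=\pi(a)*\pi(b)\). Because \(B/I\) is trivial, \(\pi(a)*\pi(b)=0\). Thus every generator \(a*b\) of \(B*B\) lies in \(I\), so \(B*B\leq I\subsetneq B\).

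\textbf{Step 2: \(\partial(B)\neq B\).} It suffices to show that \([B,B]_B\leq I\). Since \([B,B]_B\) is the ideal generated by the listed elements and \(I\) is an ideal, it is enough to check that the generators lie in \(I\). Their images under \(\pi\) are:
\begin{itemize}
\item additive commutators in \(B/I\), which vanish because \((B/I,+)\cong C_p\) is abelian;
\item multiplicative commutators, which vanish because \((B/I,\cdot)\cong C_p\) is abelian;
\item elements \(\pi(a)\cdot\pi(b)-(\pi(a)+\pi(b))\), which vanish because the two operations of \(B/I\) coincide.
\end{itemize}
Hence all generators lie in \(I=\ker\pi\), so \(\partial(B)\leq I\neq B\).

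The only point requiring care is that the ideal realised in Corollary~\ref{ideal index p} has quotient isomorphic to \(\Triv(C_p)\), and not merely index \(p\). This follows directly from the argument via Corollary~\ref{cor: simple-multiplicative-nilpotent}. Alternatively, any skew brace of prime order \(p\) has both groups cyclic, and \(\lambda\colon C_p\to\Aut(C_p)\cong C_{p-1}\) must be trivial, so the quotient is trivial in any case. With this in place, the rest is routine.
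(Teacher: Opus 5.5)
Your proposal is correct and follows the paper's proof: take the index-$p$ ideal $I$ from Corollary~\ref{ideal index p}, observe $B/I\cong\Triv(C_p)$, and conclude $B*B\leq I$ and $\partial(B)\leq I$ via the canonical projection. The differences are only cosmetic. You check that the generators of $[B,B]_B$ lie in $\ker\pi$ instead of citing $\pi(\partial(B))=\partial(B/I)$, and you supply the (correct) reason that every skew brace of prime order is trivial, which the paper just asserts; your remark that $B\neq 0$ is needed is also a fair caveat.
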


\begin{proof}
By Corollary~\ref{ideal index p}, \(B\) admits an ideal \(I\) of index
\(p\) for some prime \(p\). Hence \(B/I\) is a skew brace of order \(p\),
and therefore $B/I\simeq \Triv(C_p).$
In particular, the star product on \(B/I\) is trivial. Thus, for every
\(a,b\in B\),
\[
(a+I)*(b+I)=a*b+I=I,
\]
and consequently $B*B\leq I.$
Since \(I\neq B\), it follows that \(B*B\neq B\). Moreover, \(B/I\) is abelian, and hence
$\partial(B/I)=0.$
Since the canonical projection \(B\to B/I\) maps \(\partial(B)\) onto
\(\partial(B/I)\), we obtain $\partial(B)\leq I.$
Since \(I\) is a proper ideal of \(B\), it follows that $\partial(B)\neq B.$
\end{proof}

\begin{example} \rm \label{example:nilpotent-not-left-nilpotent}
The preceding result does not imply either that \(B\) is left
nilpotent or that \(B\) admits an ideal of index \(p\) for every prime
\(p\) dividing \(|B|\). Indeed, write
$S_3=\langle r,s\mid r^3=s^2=1,\ srs=r^{-1}\rangle.$
The group \(S_3\) admits the exact factorization $S_3=\langle r\rangle\langle s\rangle
   \simeq C_3\rtimes C_2.$
Hence, by \cite[Theorem~2.3]{SmoktunowiczVendramin2018}, there exists a
skew brace \(B\) whose additive group is isomorphic to \(S_3\) and whose
multiplicative group is isomorphic to $C_3\times C_2\simeq C_6.$
In particular, \((B,\cdot)\) is nilpotent.
More explicitly, identifying the underlying set of \(B\) with \(S_3\),
every element can be written uniquely as \(r^is^j\), where
\(0\leq i\leq 2\) and \(0\leq j\leq 1\), and the multiplicative
operation is given by $(r^is^j)\cdot b=r^i b s^j$
for every \(b\in S_3\). It follows that
\[
\lambda_{r^is^j}(b)
=(r^is^j)^{-1}r^i b s^j
=s^jbs^j.
\]
Thus \(\lambda_{r^i}=\operatorname{id}\), whereas
\(\lambda_{r^is}\) is conjugation by \(s\).
We now compute the left series. Since the quotient
\(S_3/\langle r\rangle\) is fixed by every lambda map, we have $B*B\leq \langle r\rangle.$
On the other hand,
\[
s*r
=\lambda_s(r)r^{-1}
=srsr^{-1}
=r^{-1}r^{-1}
=r,
\]
and hence $B*B=\langle r\rangle.$
Moreover, the same computation gives $B*\langle r\rangle=\langle r\rangle.$
Consequently, if \(B^1=B\) and \(B^{n+1}=B*B^n\), then $B^n=\langle r\rangle$
for every \(n\geq 2\). Therefore \(B\) is not left nilpotent. Finally, \(B\) does not admit an ideal of index \(3\). Indeed, such an
ideal would be an additive normal subgroup of order \(2\) in
\((B,+)\simeq S_3\), but \(S_3\) has no normal subgroup of order \(2\).
\end{example}

\section{Solvability of two-sided skew braces} \label{sec: Solvability two sided}

\subsection{The finite case}

In the previous section, we studied finite skew braces with nilpotent
multiplicative group. In particular, we proved that such a skew brace
cannot be simple, apart from the trivial skew braces of prime order.
It is therefore natural to ask whether the nilpotency of the
multiplicative group also forces the skew brace itself to be solvable. This is not the case, as the next example shows. 

\begin{example} \rm
In
\cite[Example~38]{BallesterBolinchesEstebanRomeroJimenezSeralPerezCalabuig2024}
the authors construct the skew brace $B=\operatorname{SmallBrace}(32,24003),$
which is not solvable. Since \(B\) has order \(32\), its multiplicative
group is a \(2\)-group and hence is nilpotent.
\end{example}

Thus the nilpotency
of \((B,\cdot)\) alone is too weak to guarantee the solvability of
\(B\). This leads to the stronger question of whether a finite skew brace with
abelian multiplicative group must be solvable. To address this question,
we work in the more general setting of two-sided skew braces. Recall that a skew brace \(B=(B,+,\cdot)\) is called \emph{two-sided} if,
in addition to the usual skew brace identity
it also satisfies
\[
(a+b)\cdot c=a\cdot c-c+b\cdot c
\]
for all \(a,b,c\in B\).

\begin{remark} \label{rem:abelian-multiplicative-two-sided}
Every skew brace with abelian multiplicative group is two-sided. Indeed,
for all \(a,b,c\in B\), we have
\[
(a+b)\cdot c
=c\cdot(a+b)
=c\cdot a-c+c\cdot b
=a\cdot c-c+b\cdot c.
\]
\end{remark}

We recall the following fundamental property of two-sided skew braces.

\begin{Proposition}\label{prop:characteristic-additive-ideal}
Let \(B\) be a two-sided skew brace, and let \(I\) be a characteristic
subgroup of \((B,+)\). Then \(I\) is an ideal of \(B\).
\end{Proposition}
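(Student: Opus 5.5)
The plan is to show that both left and right multiplications induce additive automorphisms in a two-sided skew brace. Then the three conditions defining an ideal (left ideal, additive normality, multiplicative normality) all follow from the invariance of \(I\) under \(\operatorname{Aut}(B,+)\).

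First, the easy parts. Since \(I\) is characteristic in \((B,+)\), it is normal in \((B,+)\). Since each \(\lambda_a\in\operatorname{Aut}(B,+)\), we get \(\lambda_a(I)=I\) for all \(a\in B\), so \(I\) is a left ideal. By Proposition~\ref{lem:characteristic-left-ideal}, \(I\) is then also a subgroup of \((B,\cdot)\). It remains to prove \(I\trianglelefteq(B,\cdot)\); this is the only place where two-sidedness enters.

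For \(c\in B\), I will introduce the map
\[
\mu_c\colon B\longrightarrow B,\qquad \mu_c(x)=x\cdot c-c .
\]
The two-sided identity gives
\[
\mu_c(a+b)=(a\cdot c-c+b\cdot c)-c=\mu_c(a)+\mu_c(b),
\]
so \(\mu_c\) is an additive endomorphism. It is bijective, being the composition of the bijection \(x\mapsto x\cdot c\) with the additive translation by \(-c\). Hence \(\mu_c\in\operatorname{Aut}(B,+)\), and therefore \(\mu_c(I)=I\). Checking that \(\mu_c\) is a genuine automorphism is the key step, though here it is a one-line verification rather than a real obstacle.

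Now let \(i\in I\) and \(c\in B\). We have \(i\cdot c=\mu_c(i)+c\in I+c\). By additive normality, \(I+c=c+I\). Using \(\lambda_c(I)=I\), we also have \(c+I=c+\lambda_c(I)=c\cdot I\). Thus \(I\cdot c\subseteq c\cdot I\). Both sets have cardinality \(|I|\) (or, avoiding finiteness, apply the same argument with \(c^{-1}\) to get the reverse inclusion). Hence \(I\cdot c=c\cdot I\) for all \(c\), so \(I\trianglelefteq(B,\cdot)\) and \(I\) is an ideal of \(B\).

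Alternatively, one could invoke Proposition~\ref{lem:left-ideal-criterion}, but the direct coset argument above avoids its reliance on finiteness.
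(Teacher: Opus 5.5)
Your proof is correct. The paper gives no argument of its own for this statement; it only cites Trappeniers' Proposition~2.3. So your proposal is a self-contained alternative rather than a reconstruction of an argument in the text.

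The heart of your route is that two-sidedness says exactly that \(\mu_c(x)=x\cdot c-c\) is additive. Hence \(\mu_c\in\Aut(B,+)\) fixes \(I\), and
\[
I\cdot c=\mu_c(I)+c=I+c=c+I=c+\lambda_c(I)=c\cdot I .
\]
Because \(\mu_c(I)=I\), the first step is already an equality, so neither the cardinality count nor the \(c^{-1}\) trick is needed.

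Compared with the bare citation, your version has two advantages:
\begin{itemize}
\item It shows that two-sidedness is needed only for multiplicative normality. The left-ideal and additive-normality parts hold in any skew brace.
\item It is valid verbatim for infinite skew braces. This matters because the paper applies this proposition to possibly infinite \(B\) (with \(D=(B,+)'\)) in the proof of Theorem~\ref{intro:theorem-C}.
\end{itemize}
You are also right that the paper's proof of Proposition~\ref{lem:left-ideal-criterion} uses a cardinality comparison that is only conclusive for finite \(I\), so avoiding it is the cleaner choice. An equivalent formulation is that, in a two-sided skew brace, each conjugation \(x\mapsto c\cdot x\cdot c^{-1}\) of \((B,\cdot)\) is an automorphism of \((B,+)\). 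Your \(\mu_c\) does the same job with a one-line verification. The citation, for its part, keeps the exposition short.
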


\begin{proof}
This follows from \cite[Proposition~2.3]{TrappeniersTwoSided}.
\end{proof}

\begin{lemma}\label{commutatorTwosided}
Let \(B\) be a two-sided skew brace. Then
\[
\partial(B)=[B,B]_+ + B^2.
\]
\end{lemma}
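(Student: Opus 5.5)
The plan is to prove the two inclusions separately. Set
\[
J=[B,B]_+ + B^2 .
\]

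\emph{Step 1: \(J\subseteq\partial(B)\).} By definition, \(\partial(B)=[B,B]_B\) is the ideal generated by \([B,B]_+\), by \([B,B]_\cdot\), and by the elements \(a\cdot b-(a+b)\). It therefore contains \([B,B]_+\). For \(B^2\), note that
\[
a\cdot b-(a+b)=a+\lambda_a(b)-b-a,
\]
which is the additive conjugate of \(a*b=\lambda_a(b)-b\) by \(a\). Since \(\partial(B)\) is additively normal, every \(a*b\) lies in \(\partial(B)\), so \(B^2\subseteq \partial(B)\).

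\emph{Step 2: \(J\) is an ideal.} \([B,B]_+\) is characteristic in \((B,+)\), so by Proposition~\ref{prop:characteristic-additive-ideal} it is an ideal of \(B\). Also \(B^2=B*B\) is an ideal, as recalled in the preliminaries. The sum of two ideals is an ideal: it is an additive normal subgroup, it is \(\lambda\)-invariant, and \((I+K)*B\le I+K\). The last point uses \((i+k)*b=\lambda_i(\lambda_k(b))-b\) together with the criterion of Proposition~\ref{lem:left-ideal-criterion}; alternatively, \(I+K=I\cdot K\) is multiplicatively normal. Hence \(J\) is an ideal of \(B\).

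\emph{Step 3: \(\partial(B)\subseteq J\).} Since \(J\) is an ideal, it suffices to show that \(J\) contains the three families of generators.
\begin{itemize}
\item \([B,B]_+\subseteq J\) by definition.
\item Each \(a\cdot b-(a+b)\) is an additive conjugate of \(a*b\in B^2\), so it lies in \(J\).
\item \([B,B]_\cdot\subseteq J\). The cleanest route is to pass to the quotient \(\bar B=B/J\). There \(\bar B*\bar B=0\), so \(\bar B\) is trivial and its multiplication equals its addition. Also \([\bar B,\bar B]_+=0\), so the addition is abelian. Hence \((\bar B,\cdot)\) is abelian, which means \([B,B]_\cdot\subseteq J\).
\end{itemize}
Therefore \(\partial(B)\subseteq J\).

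The main obstacle is Step 2. It is the only place where the two-sided hypothesis enters, through Proposition~\ref{prop:characteristic-additive-ideal}. Without it, \([B,B]_+\) need not be multiplicatively normal, and the ideal generated would be larger than \(J\). Once \(J\) is known to be an ideal, the quotient argument in Step 3 is routine.
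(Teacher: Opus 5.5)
Your proof is correct. Step~2 matches the paper: two-sidedness enters only through Proposition~\ref{prop:characteristic-additive-ideal}, to make $[B,B]_+$ an ideal. The difference is in the identification $\partial(B)=[B,B]_+ + B^2$. The paper simply cites \cite[Theorem~3.6]{BallesterBolinchesEtAlCentral}, whereas you prove both inclusions directly. You use $a\cdot b-(a+b)=a+(a*b)-a$, so each generator of the third family is an additive conjugate of an element of $B^2$. You also use that $B/J$ is trivial with abelian addition, which forces $(B/J,\cdot)$ to be abelian. Your route is self-contained and shows why the formula holds: $J$ is the smallest ideal with abelian quotient. The paper's route is shorter but depends on an external theorem.

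Two corrections. First, the identity $(i+k)*b=\lambda_i(\lambda_k(b))-b$ is false, since $\lambda$ is a homomorphism on $(B,\cdot)$, not on $(B,+)$. The correct computation writes $i+k=i\cdot k'$ with $k'=\lambda_i^{-1}(k)\in K$, which gives $(i+k)*b=\lambda_i(k'*b)+i*b\in I+K$. Your alternative argument via $I+K=I\cdot K$ is fine.

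Second, and more importantly, your closing remark is wrong: two-sidedness is not needed at all. In any skew brace, $[B,B]_+$ is characteristic in $(B,+)$ and hence $\lambda$-invariant. So $J$ is a left ideal normal in $(B,+)$, and trivially $J*B\subseteq B*B\subseteq J$. By Proposition~\ref{lem:left-ideal-criterion}, $J$ is then an ideal, whether or not $[B,B]_+$ itself is multiplicatively normal, so the ideal generated is never larger than $J$. Consequently your Steps~1 and~3 prove $\partial(B)=[B,B]_+ + B^2$ for every skew brace.
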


\begin{proof}
Since \(B\) is two-sided, the additive commutator subgroup
\([B,B]_+\) is characteristic in \((B,+)\), and hence it is an ideal of
\(B\) by Proposition~\ref{prop:characteristic-additive-ideal}.
Moreover, \(B^2=B*B\) is an ideal of \(B\). Therefore $[B,B]_+ + B^2$
is an ideal of \(B\). By
\cite[Theorem~3.6]{BallesterBolinchesEtAlCentral}, we have $[B,B]_B=[B,B]_+ + B^2.$
Since \(\partial(B)=[B,B]_B\), the result follows.
\end{proof}

\begin{lemma}\label{lem:star-product-identity}
Let \(B\) be a skew brace. Then, for all \(x,y,b\in B\),
\[
(x\cdot y)*b=x*(y*b)+y*b+x*b.
\]
\end{lemma}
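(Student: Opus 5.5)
We will prove the identity by direct computation. The only facts needed are that \(\lambda\colon(B,\cdot)\to\Aut(B,+)\) is a homomorphism and the definition \(a*b=\lambda_a(b)-b\). We rewrite both sides in terms of \(\lambda_x\), \(\lambda_y\) and \(\lambda_{x\cdot y}=\lambda_x\lambda_y\), and then check that the resulting additive expressions coincide. Since \((B,+)\) need not be abelian, every step must preserve the order of the summands.

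\textbf{Left-hand side.} First, since \(\lambda\) is a homomorphism,
\[
(x\cdot y)*b=\lambda_{x\cdot y}(b)-b=\lambda_x(\lambda_y(b))-b.
\]

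\textbf{Right-hand side.} Next we expand each term:
\[
x*(y*b)=\lambda_x\bigl(\lambda_y(b)-b\bigr)-\bigl(\lambda_y(b)-b\bigr)
=\lambda_x(\lambda_y(b))-\lambda_x(b)+b-\lambda_y(b),
\]
where we use that \(\lambda_x\) is an additive automorphism and that \(-(u-v)=v-u\) in any group. The other two terms are \(y*b=\lambda_y(b)-b\) and \(x*b=\lambda_x(b)-b\). Adding the three expressions in the stated order, the inner terms telescope: the summands \(-\lambda_y(b)\) and \(\lambda_y(b)\) cancel, then \(b\) and \(-b\) cancel, then \(-\lambda_x(b)\) and \(\lambda_x(b)\) cancel. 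What remains is
\[
x*(y*b)+y*b+x*b=\lambda_x(\lambda_y(b))-b,
\]
which equals \((x\cdot y)*b\).

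The only delicate point is the non-commutativity of \(+\). It is therefore essential that the terms appear in exactly the order \(x*(y*b)+y*b+x*b\), because this is the order in which adjacent cancellation works. Apart from that, the argument is routine.
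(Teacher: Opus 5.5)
Your proof is correct and is essentially the paper's argument. Both use only $\lambda_{x\cdot y}=\lambda_x\lambda_y$ and the additivity of $\lambda_x$; the paper runs the computation forward from the left-hand side by substituting $\lambda_y(b)=y*b+b$, while you expand the right-hand side and let the terms cancel in order, correctly respecting the non-commutativity of $(B,+)$.
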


\begin{proof}
Since \(\lambda\colon(B,\cdot)\to\operatorname{Aut}(B,+)\) is a group
homomorphism, \(\lambda_{x\cdot y}=\lambda_x\lambda_y\). Hence
\[
\begin{aligned}
(x\cdot y)*b
&=\lambda_{x\cdot y}(b)-b
 =\lambda_x\bigl(\lambda_y(b)\bigr)-b\\
&=\lambda_x(y*b+b)-b
 =\lambda_x(y*b)+\lambda_x(b)-b\\
&=x*(y*b)+y*b+x*b.
\end{aligned}
\]
\end{proof}
We are now ready to prove Theorem~\ref{intro:theorem-B}.
\begin{proof}[Proof of Theorem \ref{intro:theorem-B}]

Let \(K=[I,I]_I\). Since \(I\) is itself a two-sided skew brace,
Lemma~\ref{commutatorTwosided} gives $K=[I,I]_+ + I^2.$
By \cite[Lemmas~5.2 and~5.3]{TrappeniersTwoSided}, both \([I,I]_+\) and
\(I^2\) are left ideals of \(B\) and are normal in \((B,\cdot)\).
Since the sum of two left ideals coincides with their multiplicative
product, we have $K=[I,I]_+ + I^2=[I,I]_+\cdot I^2.$
Therefore \(K\) is a left ideal of \(B\) and is normal in
\((B,\cdot)\).
It remains to prove that
\((K,+)\trianglelefteq (B,+)\). We first prove that \(K*B\subseteq K\). Fix \(b\in B\) and set
\[
L_b=\{z\in I \mid z*b\in K\}.
\]
Consider the map \(\varphi_b\colon I\to I/K\) given by $\varphi_b(z)=z*b+K.$
This is well-defined because \(I*B\subseteq I\).
We claim that \(\varphi_b\) is additive. 
Indeed, for \(z,w\in I\), the right distributive law yields
\[
\begin{aligned}
(z+w)*b
   &=-(z+w)+(z+w)\cdot b-b\\
   &=-w-z+\bigl(z\cdot b-b+w\cdot b\bigr)-b\\
   &=-w+\bigl(-z+z\cdot b-b\bigr)+w+\bigl(-w+w\cdot b-b\bigr)\\
   &=-w+z*b+w+w*b.
\end{aligned}
\]
Since \(I/K\) is an abelian skew brace, the additive group \((I/K,+)\)
is abelian. Therefore, using the previous identity, we obtain
\[
\begin{aligned}
\varphi_b(z+w)
   &=(z+w)*b+K\\
   &=-w+z*b+w+w*b+K\\
   &=z*b+w*b+K\\
   &=(z*b+K)+(w*b+K)\\
   &=\varphi_b(z)+\varphi_b(w).
\end{aligned}
\]
Thus \(\varphi_b\) is additive.
We now prove that \(\varphi_b\) is multiplicative. Let \(x,y\in I\).
Using Lemma \ref{lem:star-product-identity} we obtain
\[
\begin{aligned}
\varphi_b(x\cdot y)
   &=(x\cdot y)*b+K\\
   &=x*(y*b)+y*b+x*b+K.
\end{aligned}
\]
Since \(I\) is an ideal of \(B\), by Proposition \ref{lem:left-ideal-criterion},  we have \(y*b\in I\). Therefore $x*(y*b)\in I*I\leq K,$
and hence
\[
\begin{aligned}
\varphi_b(x\cdot y)
   &=y*b+x*b+K\\
   &=x*b+y*b+K\\
   &=\varphi_b(x)+\varphi_b(y),
\end{aligned}
\]
where the second equality follows from the fact that \((I/K,+)\) is
abelian. Since \(I/K\) is a trivial skew brace, its additive and
multiplicative operations coincide. Consequently, $\varphi_b(x\cdot y)
   =\varphi_b(x)\cdot\varphi_b(y).$
Thus \(\varphi_b\) is multiplicative. Therefore \(L_b=\ker\varphi_b\) is an ideal of \(I\).
Since \(\varphi_b\colon I\to I/K\) is a skew brace homomorphism and
\(I/K\) is a trivial abelian skew brace, we have, for all \(x,y\in I\),
\[
\varphi_b([x,y]_+)=0,\qquad
\varphi_b([x,y]_\cdot)=0,
\qquad\text{and}\qquad
\varphi_b(x*y)
=\varphi_b(x)*\varphi_b(y)=0.
\]
Thus \(\ker\varphi_b=L_b\) contains the additive commutators, the
multiplicative commutators, and \(I*I\). Since \(K=[I,I]_I\) is the
ideal of \(I\) generated by these elements, it follows that $K\leq L_b.$ Since \(b\in B\) was arbitrary, we conclude that $K*B\leq K.$
We now prove that \(K\) is normal in \((B,+)\). Fix \(b\in B\). Since
\(K\) is a left ideal, we have \(\lambda_b(K)=K\), and therefore
\[
b\cdot K=b+\lambda_b(K)=b+K.
\]
On the other hand, since \(K*B\leq K\), for every \(k\in K\) we have
\(k*b\in K\). Hence $\lambda_k(b)=k*b+b\in K+b,$
and consequently
\[
k\cdot b=k+\lambda_k(b)\in K+b.
\]
Thus $K\cdot b\subseteq K+b.$
Since \(K\trianglelefteq(B,\cdot)\), we have \(b\cdot K=K\cdot b\), and
therefore
\[
b+K=b\cdot K=K\cdot b\subseteq K+b.
\]
Adding \(-b\) on the right gives $b+K-b\subseteq K.$ Applying the same argument with \(-b\) in place of \(b\), we obtain $-b+K\subseteq K-b.$
Adding \(b\) on the left and on the right yields $K\subseteq b+K-b.$ Hence $b+K-b=K.$
Since \(b\in B\) was arbitrary, it follows that
$K\trianglelefteq(B,+).$
Together with the facts that \(K\) is a left ideal of \(B\) and
\(K\trianglelefteq(B,\cdot)\), this shows that \(K\trianglelefteq B\).

\end{proof}

\begin{lemma}\label{lem:commutators-independent-ambient}
Let \(B\) be a two-sided skew brace, and let \(I,J\trianglelefteq B\)
with \(J\leq I\). Then $[J,J]_B=[J,J]_I=[J,J]_J.$
\end{lemma}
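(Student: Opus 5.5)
The plan is to show that all three commutators equal the same explicit subgroup, namely $[J,J]_+ + J*J$.

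\emph{First equality, $[J,J]_J$.} Since $J$ is an ideal of $B$, it is a two-sided skew brace in its own right. Lemma~\ref{commutatorTwosided} applied to $J$ then gives
\[
[J,J]_J=[J,J]_+ + J*J .
\]
Theorem~\ref{intro:theorem-B}, applied to the ideal $J$ of $B$, shows that this set is an ideal of $B$. Applied to $J$ viewed as an ideal of the two-sided skew brace $I$, it shows that the set is also an ideal of $I$.

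\emph{Second equality, $[J,J]_B$.} I will compare $[J,J]_B$ with $K:=[J,J]_+ + J*J$. By definition, $[J,J]_B$ is the ideal of $B$ generated by three families of elements, with $j,j'\in J$:
\begin{itemize}
\item the additive commutators $[j,j']_+$;
\item the multiplicative commutators $[j,j']_\cdot$;
\item the elements $j\cdot j'-(j+j')$.
\end{itemize}

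For the inclusion $K\le[J,J]_B$:
\begin{itemize}
\item $[J,J]_+$ lies in $[J,J]_B$ directly, since its generators are among the generators above.
\item Each $j*j'$ lies in $[J,J]_B$, because $j*j'=\lambda_j(j')-j'=-j+j\cdot j'-j'$. This element differs from $j\cdot j'-(j+j')$ only by additive conjugation and commutator corrections, all of which lie in the ideal generated by the commutators.
\end{itemize}
Concretely, $j\cdot j'-(j+j') = j\cdot j' - j' - j$, and $-j+(j\cdot j'-j'-j)+j = j*j'$. So $j*j'$ is an additive conjugate of a generator of $[J,J]_B$, and hence lies in $[J,J]_B$.

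For the reverse inclusion $[J,J]_B\le K$: since $K$ is an ideal of $B$, it suffices to check that each generator lies in $K$.
\begin{itemize}
\item The additive commutators lie in $K$ by definition.
\item The elements $j\cdot j'-(j+j')$ lie in $K$, by the conjugation identity above read in reverse.
\item The multiplicative commutators lie in $K$ because $J/K=J/[J,J]_J$ is a trivial abelian skew brace. So $(J,\cdot)/K$ is abelian, and $[j,j']_\cdot\in K$.
\end{itemize}
This step is exactly where Theorem~\ref{intro:theorem-B} is essential: without knowing that $K$ is an ideal of $B$, we could not conclude that the ideal of $B$ generated by these elements stays inside $K$.

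\emph{Third equality, $[J,J]_I$.} The same argument, with $B$ replaced by $I$, gives $[J,J]_I=K$. Here I use that $I$ is two-sided and $J\trianglelefteq I$, so that Theorem~\ref{intro:theorem-B} applies inside $I$.

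\emph{Main obstacle.} The only delicate point is verifying that the ideals of $I$ are exactly what Theorem~\ref{intro:theorem-B} requires, namely that $J$ is an ideal of the skew brace $I$. This holds: $J$ is a left ideal of $I$ because it is $\lambda$-invariant under all of $B$, and it is normal in both groups of $I$ because it is normal in both groups of $B$. The remaining steps are routine identity manipulation.
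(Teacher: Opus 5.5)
Your proof is correct. Its decisive step matches the paper's: Theorem~\ref{intro:theorem-B} makes $[J,J]_J$ an ideal of $B$ (and of $I$), and then minimality of $[J,J]_B$ gives $[J,J]_B\le[J,J]_J$.

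Where you differ is the reverse inclusion, and the detour through the explicit form $K=[J,J]_+ + J*J$ from Lemma~\ref{commutatorTwosided}. You prove $K\le[J,J]_B$ element by element, using the identity $-j+\bigl(j\cdot j'-(j+j')\bigr)+j=j*j'$. The paper instead makes two observations:
\begin{itemize}
\item $[J,J]_B$ and $[J,J]_J$ are generated by literally the same set of elements;
\item $[J,J]_B$, being an ideal of $B$ contained in $J$, is an ideal of $J$.
\end{itemize}
Minimality then gives $[J,J]_J\le[J,J]_B$ directly. In the same way, your check that multiplicative commutators lie in $K$ is immediate, since they are among the generators of $[J,J]_J$.

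The paper's route makes both inclusions pure minimality arguments. At this stage it needs neither Lemma~\ref{commutatorTwosided} nor any element computation. Your route costs a little more, but it yields the explicit common value $[J,J]_B=[J,J]_I=[J,J]_J=[J,J]_+ + J*J$.
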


\begin{proof}
By Theorem~\ref{intro:theorem-B}, the internal commutator
\([J,J]_J\) is an ideal of \(B\). Since it contains all the generators
of \([J,J]_B\), the minimality of \([J,J]_B\) gives $[J,J]_B\leq [J,J]_J.$ Conversely, since \(J\trianglelefteq B\), the ideal \([J,J]_B\) is
contained in \(J\). Hence \([J,J]_B\) is an ideal of the skew brace
\(J\) containing all the generators of \([J,J]_J\). Therefore $[J,J]_J\leq [J,J]_B.$
Thus $[J,J]_B=[J,J]_J.$ Since \(J\trianglelefteq I\), the same argument applied inside \(I\)
gives $[J,J]_I=[J,J]_J.$
Consequently, $[J,J]_B=[J,J]_I=[J,J]_J.$
\end{proof}

\begin{corollary}\label{criteria-for-solvability}
Let \(B\) be a two-sided skew brace. Then \(B\) is solvable if and only
if there exists an ideal \(I\trianglelefteq B\) such that \(I\), regarded
as a skew brace in its own right, and \(B/I\) are both solvable.
\end{corollary}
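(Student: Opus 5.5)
The forward direction is immediate: if \(B\) is solvable, take \(I=B\) (or \(I=0\)). Then \(B/B=0\) is solvable, and \(I=B\) regarded as a skew brace in its own right is solvable because \(\partial_B^n(B)=\partial^n(B)=0\). The substance is the converse. I would prove it in two steps: first compare the derived series of \(I\) computed in \(B\) with its internal derived series, and then push the derived series of \(B\) into \(I\) using the quotient.

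\emph{Step 1.} I would show by induction on \(n\) that \(\partial_B^n(I)=\partial_I^n(I)\) for all \(n\geq 0\), and that this is an ideal of \(B\). The case \(n=0\) is trivial. Suppose \(J=\partial_B^n(I)=\partial_I^n(I)\) is an ideal of \(B\) contained in \(I\). Then \(J\) is also an ideal of \(I\), and Lemma~\ref{lem:commutators-independent-ambient} gives
\[
[J,J]_B=[J,J]_I=[J,J]_J .
\]
The left side is \(\partial_B^{n+1}(I)\) and the middle is \(\partial_I^{n+1}(I)\). By definition \([J,J]_B\) is an ideal of \(B\) (equivalently, Theorem~\ref{intro:theorem-B} applied to \(J\) shows \([J,J]_J\trianglelefteq B\)), so the induction continues. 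The main point to check is exactly that each term stays an ideal of \(B\), since this is what Lemma~\ref{lem:commutators-independent-ambient} requires. Theorem~\ref{intro:theorem-B} is the ingredient that makes this work in the two-sided case. Consequently, if \(I\) is solvable as a skew brace, then \(\partial_B^m(I)=0\) for some \(m\).

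\emph{Step 2.} Let \(\pi\colon B\to B/I\) be the projection. As used in the proof of the earlier corollary, \(\pi\) maps \([X,X]_B\) onto \([\pi X,\pi X]_{B/I}\) for ideals \(X\) of \(B\). The reason is that the image of an ideal under a surjective homomorphism is an ideal, and the generators correspond. By induction, \(\pi(\partial^k(B))=\partial^k(B/I)\). Since \(B/I\) is solvable, \(\partial^k(B/I)=0\) for some \(k\), so \(\partial^k(B)\leq I\).

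\emph{Conclusion.} Monotonicity gives \(\partial_B^j(X)\leq \partial_B^j(Y)\) whenever \(X\leq Y\) are ideals, because \([X,X]_B\leq[Y,Y]_B\) by the generating sets. Hence
\[
\partial^{k+m}(B)=\partial_B^m\bigl(\partial^k(B)\bigr)\leq \partial_B^m(I)=0,
\]
and \(B\) is solvable. I expect the only delicate step to be Step 1, specifically keeping track of the ideal hypotheses when applying Lemma~\ref{lem:commutators-independent-ambient} at each stage of the induction.
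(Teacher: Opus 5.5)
Your proof is correct and is essentially the paper's argument. The paper likewise uses the projection to get $\partial^m(B)\leq I$, and applies Lemma~\ref{lem:commutators-independent-ambient} (resting on Theorem~\ref{intro:theorem-B}) inductively so that the internal derived terms $I^{(n)}$ stay ideals of $B$ with $[I^{(n)},I^{(n)}]_B=I^{(n+1)}$. The difference is only packaging: you first isolate the identity $\partial_B^n(I)=\partial_I^n(I)$ and then invoke monotonicity, whereas the paper combines both steps into a single induction showing $\partial^{m+n}(B)\leq I^{(n)}$.
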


\begin{proof}
If \(B\) is solvable, it is enough to take \(I=B\). Conversely, suppose that \(I\) and \(B/I\) are solvable. Let $\pi\colon B\longrightarrow B/I$
be the canonical projection. Since skew brace homomorphisms preserve
commutators, we have $\pi\bigl(\partial^n(B)\bigr)=\partial^n(B/I)$
for every \(n\geq 0\). Since \(B/I\) is solvable, there exists
\(m\geq 0\) such that $\partial^m(B/I)=0.$
Therefore $\partial^m(B)\leq I.$ Let
\[
I^{(0)}=I,
\qquad
I^{(n+1)}=[I^{(n)},I^{(n)}]_I
\]
for every \(n\geq 0\). Thus \(\{I^{(n)}\}_{n\geq 0}\) is the derived
series of \(I\), regarded as a skew brace in its own right. We claim that \(I^{(n)}\trianglelefteq B\) for every \(n\geq 0\).
This is clear for \(n=0\). Suppose that
\(I^{(n)}\trianglelefteq B\). Since \(I^{(n)}\leq I\), by
Lemma~\ref{lem:commutators-independent-ambient} we have
\[
[I^{(n)},I^{(n)}]_B
=
[I^{(n)},I^{(n)}]_I
=
[I^{(n)},I^{(n)}]_{I^{(n)}}.
\]
By Theorem~\ref{intro:theorem-B}, the last term is an ideal of \(B\).
Hence $I^{(n+1)}
=
[I^{(n)},I^{(n)}]_I
\trianglelefteq B.$
The claim follows by induction. We now prove by induction on \(n\) that $\partial^{m+n}(B)\leq I^{(n)}$
for every \(n\geq 0\). For \(n=0\), this is precisely
\(\partial^m(B)\leq I\). Suppose that the assertion holds for some
\(n\geq 0\). Then
\[
\begin{aligned}
\partial^{m+n+1}(B)
&=
[\partial^{m+n}(B),\partial^{m+n}(B)]_B\\
&\leq
[I^{(n)},I^{(n)}]_B\\
&=
[I^{(n)},I^{(n)}]_I\\
&=
I^{(n+1)},
\end{aligned}
\]
where the equality in the third line follows from
Lemma~\ref{lem:commutators-independent-ambient}. Thus the assertion
follows by induction. Since \(I\) is solvable, there exists \(r\geq 0\) such that $I^{(r)}=0.$
Consequently, $\partial^{m+r}(B)\leq I^{(r)}=0,$
and therefore \(B\) is solvable.
\end{proof}

The preceding corollary does not hold for arbitrary skew braces; in
particular, the two-sidedness assumption is essential. Recall that a skew brace \(B\) is said to be \emph{weakly solvable} if the series
\[
B^{[1]}=B,
\qquad
B^{[n+1]}=B^{[n]}*B^{[n]}
\]
trivializes for some \(n_0\geq 1\), and $B^{[n]}/B^{[n+1]}$
is abelian for every \(n\geq 0\).

\begin{example} \rm
Consider the skew brace $B=\operatorname{SmallBrace}(32,24003)$
described in
\cite[Example~38]{%
BallesterBolinchesEstebanRomeroJimenezSeralPerezCalabuig2024}.
As shown in
\cite[Example~38]{BallesterBolinchesEstebanRomeroJimenezSeralPerezCalabuig2024},
this skew brace is weakly solvable but not solvable. Moreover, it has a
unique non-zero proper ideal \(I\), of order \(16\). Since weak solvability is inherited by subbraces, \(I\), regarded as a
skew brace in its own right, is weakly solvable. The computations in
\cite{BallesterBolinchesEstebanRomeroJimenezSeralPerezCalabuig2024}
show that every weakly solvable skew brace of order at most \(31\) is
solvable. Since \(|I|=16\), it follows that \(I\) is solvable as a skew
brace in its own right. Furthermore, \(I\) has index \(2\) in \(B\), and hence $B/I\simeq \Triv(C_2).$
Thus \(B/I\) is solvable, whereas \(B\) is not solvable. 
\end{example}

\begin{corollary}\label{cor:solvability-equivalences}
Let \(B\) be a finite two-sided skew brace. Then the following conditions
are equivalent:
\begin{enumerate}
    \item \(B\) is solvable;
    \item \((B,+)\) is solvable;
    \item \((B,\cdot)\) is solvable.
\end{enumerate}
\end{corollary}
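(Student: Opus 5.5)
I would prove the equivalences by strong induction on \(|B|\), feeding the extension criterion Corollary~\ref{criteria-for-solvability} with ideals produced by Proposition~\ref{prop:characteristic-additive-ideal}. The structural input from \cite{TrappeniersTwoSided} will be the fact that for a finite two-sided skew brace the additive and multiplicative groups have the same composition factors, or at least that one is solvable if and only if the other is. I take this as known; it is exactly the structural theory the introduction alludes to. With it, (ii) and (iii) are equivalent.

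For (i)\(\Rightarrow\)(ii), suppose \(\partial^n(B)=0\). Lemma~\ref{commutatorTwosided} gives \(\partial(B)=[B,B]_++B^2\supseteq [B,B]_+\). Each \(\partial^k(B)\) is an ideal, hence a two-sided skew brace in its own right. By Lemma~\ref{lem:commutators-independent-ambient}, \(\partial^{k+1}(B)=[\partial^k(B),\partial^k(B)]_{\partial^k(B)}\), so applying Lemma~\ref{commutatorTwosided} inside \(\partial^k(B)\) gives \(\partial^{k+1}(B)\supseteq [\partial^k(B),\partial^k(B)]_+\). By induction on \(k\), the \(k\)-th derived subgroup of \((B,+)\) lies in \(\partial^k(B)\). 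Hence \((B,+)\) is solvable.

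For (ii)\(\Rightarrow\)(i), induct on \(|B|\). If \(B\neq 0\) and \((B,+)\) is solvable, set \(I=[B,B]_+\). This is characteristic in \((B,+)\), hence an ideal of \(B\) by Proposition~\ref{prop:characteristic-additive-ideal}. If \(I\neq B\), then \(I\) is a proper ideal and a two-sided skew brace with solvable additive group, so it is solvable by induction. The quotient \(B/I\) has abelian additive group. If \(I=0\), then \((B,+)\) is abelian and \(B\) is itself the case that still needs handling. So the base step is this: a finite two-sided skew brace \(B\neq 0\) with abelian additive group is solvable.

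For that base step, the additive group is abelian, so \(\partial(B)=B^2\) by Lemma~\ref{commutatorTwosided}, and it suffices to show \(B^2\neq B\); induction plus Corollary~\ref{criteria-for-solvability} then finishes. I would try a Sylow decomposition. Each Sylow \(p\)-subgroup \(B_p\) of \((B,+)\) is characteristic, hence an ideal, and \(B\) is the direct product of these skew braces. So I reduce to \(p\)-power order, where \cite[Proposition~4.4]{CedoSmoktunowiczVendramin2019} gives left nilpotency and therefore \(B^2\neq B\). Then \(B_p/B_p^2\) is abelian, and by induction together with Corollary~\ref{criteria-for-solvability}, \(B_p\) is solvable. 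A direct product of solvable ideals is solvable, again by the extension criterion applied to \(B_p\trianglelefteq B\) with \(B/B_p\) being the product of the remaining factors.

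The main obstacle is the step importing from \cite{TrappeniersTwoSided} that solvability of \((B,\cdot)\) and of \((B,+)\) coincide. If only one direction is available there, I would instead run the same induction with \(I\) equal to the soluble radical of \((B,+)\), which is characteristic and hence an ideal. A simple quotient then has characteristically simple nonabelian additive group. For that case I would invoke Trappeniers' description of two-sided skew braces with nonabelian simple factors to see that \(\lambda\) acts through inner automorphisms, so that the multiplicative group is built from the same nonabelian simple groups.
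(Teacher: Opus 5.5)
Your proof is correct. For the one substantial implication, (ii)$\Rightarrow$(i), it takes a genuinely different route from the paper. Both arguments import (ii)$\Leftrightarrow$(iii) from \cite{TrappeniersTwoSided}, and both finish with the extension criterion, Corollary~\ref{criteria-for-solvability}; they differ in how the base of the induction is handled.

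The paper takes a minimal counterexample. By minimality and Corollary~\ref{criteria-for-solvability} it must be simple, and the paper then invokes Trappeniers' classification of finite simple two-sided skew braces (trivial, or almost trivial over a simple group) to force $\Triv(C_p)$. You never use the classification. Two-sidedness, through Proposition~\ref{prop:characteristic-additive-ideal}, makes $[B,B]_+$ and each Sylow subgroup $B_p$ of $(B,+)$ an ideal. This reduces everything to two-sided braces of prime-power order, and there left nilpotency \cite[Proposition~4.4]{CedoSmoktunowiczVendramin2019} supplies the proper ideal $B_p^2$. In effect you reprove, by elementary means, exactly the special case of the classification the paper needs: a simple two-sided skew brace with solvable additive group is $\Triv(C_p)$. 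Your proof is therefore more self-contained, while the paper's is shorter once the classification is granted.

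A few small points:
\begin{itemize}
\item The decomposition $B\cong\prod_p B_p$ is true but not needed. It suffices that $B_p\trianglelefteq B$, that $B_p$ is solvable, and that $B/B_p$ is solvable by induction.
\item (i)$\Rightarrow$(ii) is immediate from the definition, since $[J,J]_+$ is among the generators of $[J,J]_B$. The detour through Lemmas~\ref{commutatorTwosided} and~\ref{lem:commutators-independent-ambient} is harmless but unnecessary.
\item The fallback in your last paragraph is not needed, because the cited result of Trappeniers gives both directions of (ii)$\Leftrightarrow$(iii), exactly as the paper uses it. As sketched, it would not stand on its own in any case.
\end{itemize}
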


\begin{proof} It is clear that if \(B\) is solvable, then both \((B,+)\) and \((B,\cdot)\) are solvable. Moreover, for a finite two-sided skew brace, \((B,+)\) is solvable if and only if \((B,\cdot)\) is solvable; see \cite[Theorem 4.20]{TrappeniersTwoSided}. It remains to prove that the solvability of \((B,+)\) implies the solvability of \(B\). Suppose, by contradiction, that this is false, and let \(B\) be a counterexample of minimal order. If \(B\) is simple, then, by the classification of finite simple two-sided skew braces \cite[Theorem~4.7]{TrappeniersTwoSided}, \(B\) is either trivial or almost trivial over a finite simple group. Since \((B,+)\) is solvable, this simple group must be cyclic of prime order. Hence $B\simeq \Triv(C_p),$ and therefore \(B\) is solvable, a contradiction. Thus \(B\) is not simple. Let \(I\) be a non-zero proper ideal of \(B\). Since \(I\) and \(B/I\) are two-sided skew braces and their additive groups are respectively a subgroup and a quotient of the solvable group \((B,+)\), both \((I,+)\) and \((B/I,+)\) are solvable. By the minimality of \(B\), the skew braces \(I\) and \(B/I\) are solvable. Hence Corollary~\ref{criteria-for-solvability} implies that \(B\) is solvable, again a contradiction. Therefore \(B\) is solvable. 

\end{proof}

\begin{remark}
In \cite[Corollary~4.20]{TrappeniersTwoSided}, Trappeniers proves the
equivalence between the solvability of the two associated groups of a
finite two-sided skew brace using a different notion of solvability,
namely the following one: a skew brace \(B\) is called solvable if $B^{[n]}=0$
for some \(n\geq 1\).
This notion is weaker than the one adopted in the present paper. Indeed, $B*B\leq \partial(B),$
and an induction gives $B^{[n]}\leq \partial^{\,n-1}(B)$
for every \(n\geq 1\). Consequently, every skew brace which is solvable
in our sense is also solvable in the sense of
\cite{TrappeniersTwoSided}. Therefore
Corollary~\ref{cor:solvability-equivalences} implies
\cite[Corollary~4.20]{TrappeniersTwoSided}.
\end{remark}

\begin{corollary}
    Let $B$ be a two sided skew brace. If 
\begin{enumerate}
    \item $|B|$ is odd, then $B$ is solvable.
    \item $|B|=p^{n}q^{m}$ with $p,q$ prime, then $B$ is solvable.
\end{enumerate}
\end{corollary}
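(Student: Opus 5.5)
The plan is to reduce both statements to Corollary~\ref{cor:solvability-equivalences}. That corollary says a finite two-sided skew brace is solvable exactly when its additive group is solvable (equivalently, its multiplicative group). So it is enough to show that \((B,+)\) is a solvable group under either hypothesis, and this is classical finite group theory. We assume \(B\) is finite, as the hypotheses on \(|B|\) require.

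For part (1), \((B,+)\) is a finite group of odd order \(|B|\). By the Feit--Thompson theorem every finite group of odd order is solvable. Hence \((B,+)\) is solvable, and Corollary~\ref{cor:solvability-equivalences} gives that \(B\) is solvable.

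For part (2), \((B,+)\) has order \(p^nq^m\). By Burnside's \(p^aq^b\)-theorem it is solvable, including the degenerate cases \(p=q\) or \(m=0\), where the group is a \(p\)-group and hence nilpotent. Again Corollary~\ref{cor:solvability-equivalences} yields that \(B\) is solvable.

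We could equally apply either theorem to \((B,\cdot)\), since condition (3) of Corollary~\ref{cor:solvability-equivalences} is also equivalent. There is no real obstacle: the depth is entirely in the deep group-theoretic theorems being cited and in the structural equivalence already established. The only point to check is that finiteness of \(B\) is implicit in the hypotheses, so the corollary applies.
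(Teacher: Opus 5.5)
Your proposal is correct and matches the paper's proof: you show that \((B,+)\) is solvable using Feit--Thompson in case (1) and Burnside's \(p^aq^b\)-theorem in case (2), then apply Corollary~\ref{cor:solvability-equivalences}. You are also right to state explicitly that finiteness of \(B\) is implicit in the hypotheses on \(|B|\).
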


\begin{proof}
Suppose first that \(|B|\) is odd. Then the additive group \((B,+)\) has odd order and is therefore solvable by the Feit--Thompson theorem (see the main Theorem of \cite{FeitThompson1963}). Hence \(B\) is solvable by Corollary~\ref{cor:solvability-equivalences}. Suppose now that $|B|=p^{n}q^{m}.$ By Burnside's \(p^{a}q^{b}\)-theorem \cite{Burnside1904}, the additive group \((B,+)\) is solvable. Once again, Corollary~\ref{cor:solvability-equivalences} implies that \(B\) is solvable.
\end{proof}

\begin{corollary}
    Let $B$ be a finite skew brace such that $(B,\cdot)$ is abelian. Thus $B$ is solvable.
\end{corollary}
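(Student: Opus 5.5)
The plan is to reduce the statement to Corollary~\ref{cor:solvability-equivalences}. By Remark~\ref{rem:abelian-multiplicative-two-sided}, a skew brace with abelian multiplicative group is two-sided. Since \(B\) is also finite, Corollary~\ref{cor:solvability-equivalences} applies to \(B\).

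The key step is to verify condition (3) of that corollary, namely that \((B,\cdot)\) is solvable. This is immediate, because \((B,\cdot)\) is abelian and every abelian group is solvable (its derived subgroup is already trivial). The corollary then gives that \(B\) is solvable as a skew brace, in the sense of the derived series \(\partial^n(B)\).

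There is no substantial obstacle here. All the real work lies in Theorem~\ref{intro:theorem-B} and in the extension criterion of Corollary~\ref{criteria-for-solvability}, which are already established, together with Trappeniers' equivalence between solvability of the additive and multiplicative groups. The only points to check are the hypotheses of Corollary~\ref{cor:solvability-equivalences}, finiteness and two-sidedness, and both hold by assumption and by the remark. If one prefers to route through condition (2) instead, one invokes \cite[Theorem~4.20]{TrappeniersTwoSided} to pass from solvability of \((B,\cdot)\) to solvability of \((B,+)\), and this argument is already contained in the proof of the corollary.
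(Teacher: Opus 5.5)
Your proposal is correct and follows the paper's proof. Both obtain two-sidedness from Remark~\ref{rem:abelian-multiplicative-two-sided} and then apply Corollary~\ref{cor:solvability-equivalences}, using that the abelian group \((B,\cdot)\) is solvable.
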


\begin{proof}
Since \((B,\cdot)\) is abelian, \(B\) is two-sided by Remark~\ref{rem:abelian-multiplicative-two-sided}. Therefore Corollary~\ref{cor:solvability-equivalences} implies that \(B\) is solvable.
\end{proof}

\subsection{The infinite case and finite quotients}

The finiteness assumption in
Corollary~\ref{cor:solvability-equivalences} cannot be omitted.
Indeed, Nasybullov constructed infinite two-sided skew braces with
abelian additive group and non-solvable multiplicative group; see
\cite[Section~3]{Nasybullov2019}. Nevertheless, a residual form of the
finite result survives: whenever the additive group is solvable, every
finite homomorphic image of the multiplicative group is solvable. We first recall the relevant group-theoretic notion. A group \(G\) is
called an \emph{\(SN\)-group} if it admits an ascending, possibly
transfinite, normal series
\[
1=G_0\trianglelefteq G_1\trianglelefteq\cdots
\trianglelefteq G_\alpha=G
\]
whose factors are abelian. At every limit ordinal \(\lambda\), the
series is assumed to be continuous, that is,
\[
G_\lambda=\bigcup_{\beta<\lambda}G_\beta.
\]
Quotients of \(SN\)-groups are again \(SN\)-groups. Moreover, every
finite \(SN\)-group is solvable, since a transfinite ascending series
in a finite group contains only finitely many distinct terms.

\begin{lemma}\label{lem:multiplicative-SN}
Let \(B=(B,+,\cdot)\) be a two-sided skew brace such that \((B,+)\) is
abelian. Then \((B,\cdot)\) is an \(SN\)-group.
\end{lemma}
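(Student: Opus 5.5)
The plan is to build an ascending chain of ideals of \(B\), continuous at limit ordinals, whose factors are abelian skew braces. Every ideal is normal in \((B,\cdot)\), and an abelian skew brace has abelian multiplicative group. So such a chain gives exactly the ascending normal series with abelian factors required in the definition of an \(SN\)-group.

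\textbf{Step 1 (a maximal chain).} Consider the set of chains of ideals of \(B\), ordered by inclusion, each containing \(0\) and \(B\). A union of a chain of ideals is again an ideal, since the left ideal condition, additive normality and multiplicative normality are all checked elementwise. By Zorn's lemma there is a maximal chain \(\mathcal{C}\). It is complete under unions and intersections, so it can be indexed as a well-ordered ascending series with \(B_\lambda=\bigcup_{\beta<\lambda}B_\beta\) at limit ordinals. More precisely, I would use the standard Kurosh--Chernikov formalism of complete normal systems and their jumps; the classical criterion says that a group is \(SN\) as soon as it has a complete normal system all of whose jumps are abelian. By maximality of \(\mathcal{C}\), every jump \(J<I\) in \(\mathcal{C}\) is such that \(M=I/J\) is a minimal non-zero ideal of the two-sided skew brace \(\overline{B}=B/J\), which still has abelian additive group.

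\textbf{Step 2 (jumps are abelian).} Apply Theorem~\ref{intro:theorem-B} to the ideal \(M\) of the two-sided skew brace \(\overline{B}\): the internal commutator \([M,M]_M\) is an ideal of \(\overline{B}\) contained in \(M\). By minimality, either \([M,M]_M=0\) or \([M,M]_M=M\). In the first case \(M\) is an abelian skew brace. Hence \((I,\cdot)/(J,\cdot)\cong(M,\cdot)\) is abelian and the jump is abelian, as required. Since \((M,+)\) is abelian, Lemma~\ref{commutatorTwosided} gives \([M,M]_M=M*M\). So the second case is precisely \(M*M=M\), which is the case to rule out.

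\textbf{Step 3 (main obstacle: excluding \(M*M=M\)).} This is where I expect the real difficulty. A two-sided skew brace with abelian additive group is the same as a radical ring \((B,+,*)\) with \(a\cdot b=a+b+a*b\). In that language, \(M\) is a minimal ideal of the radical ring \(\overline{B}\) with \(M^2=M\). Radical rings with \(R^2=R\) do exist, even simple ones (Sa\c{s}iada), so a purely ring-theoretic argument cannot forbid this case outright. I would therefore not try to prove that the jumps of an arbitrary maximal chain are abelian. Instead I would try to prove directly that \((M,\cdot)\) is itself an \(SN\)-group, and then splice its series into the jump. The first attempt would use the identity \([x,y]_\cdot=(xy-yx)\cdot(\text{unit})\) to express multiplicative commutators through ring commutators. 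Then I would look for an ascending chain of Lie-type subgroups of \(M\) that are invariant under the adjoint action. If this fails, the fallback is Mal'cev's local theorem for \(SN\)-groups. This reduces the claim to finitely generated subgroups of \((B,\cdot)\), where one can hope to work inside a nilpotent, or at least residually nilpotent, finitely generated subring and use that the adjoint group of a nilpotent ring is nilpotent.
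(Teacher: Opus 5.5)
\textbf{Genuine gap.} Apart from the indexing issue below, Steps 1--2 are a correct reduction. But they reduce the lemma to exactly the case you cannot handle, and Step 3 does not handle it: it is a list of intentions, not an argument.

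\begin{itemize}
\item \emph{Splicing is circular.} Splicing an \(SN\)-series of \((M,\cdot)\) into the jump changes nothing. \(M\) is again a radical ring with \(M^2=M\), possibly simple and so with no ideals to work with. Showing that \((M,\cdot)\) is \(SN\) is the original problem.
\item \emph{The local-theorem route is unsupported.} You give no reason why the subring generated by finitely many elements and their quasi-inverses should be nilpotent or residually nilpotent. Nilpotence certainly fails in general: a radical ring all of whose finitely generated subrings are nilpotent is nil, while for instance \(p\mathbb{Z}_{(p)}\) is a non-nil radical ring.
\end{itemize}

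The paper's proof makes the same translation you make in Step 3: a two-sided brace with abelian additive group is a radical ring whose adjoint group is \((B,\cdot)\). It then simply invokes the theorem of Amberg--Dickenschied--Sysak that the adjoint group of any radical ring has a series with abelian factors. That external theorem is the missing ingredient. Nothing in your chain-of-ideals approach replaces it.

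\textbf{The ascending version cannot be proved.} Your claim in Step 1 that a maximal chain of ideals ``can be indexed as a well-ordered ascending series'' is false. For the radical ring \(p\mathbb{Z}_{(p)}\) the ideals are exactly \(0\subset\cdots\subset p^2\mathbb{Z}_{(p)}\subset p\mathbb{Z}_{(p)}\), which is not well ordered.

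More decisively, let \(R\) be the ideal of power series without constant term in \(\mathbb{Z}\langle\langle x,y\rangle\rangle\). Then \(R\) is a radical ring, hence a two-sided brace with abelian additive group. By Magnus, its adjoint group \(1+R\) contains the free group \(\langle 1+x,1+y\rangle\) of rank \(2\). A free group of rank \(2\) has no well-ordered ascending series with abelian factors. Indeed, the first non-trivial term would be infinite cyclic, and successive normalizers never leave the maximal cyclic subgroup containing it. This property passes to subgroups, so \((R,\cdot)\) has no such series.

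Hence the lemma is false with the ascending definition stated in the paper. It holds only in the classical Kurosh--Chernikov sense, a complete, not necessarily well-ordered system with abelian jumps. That is the sense your ``more precisely'' sentence switches to, and the only form in which the Amberg--Dickenschied--Sysak result can hold. This classical class is not closed under quotients, since free groups belong to it. So the same point also undermines how the lemma is used in the proof of Theorem~\ref{intro:theorem-C}.
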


\begin{proof}
Since \((B,+)\) is abelian, \(B\) is a two-sided brace in the classical
sense. By \cite[Theorem~8.1.20]{CV}, there exists a Jacobson radical
ring structure on the additive group \((B,+)\) whose adjoint group is
isomorphic to \((B,\cdot)\). By \cite{ADS}, the adjoint group of a
Jacobson radical ring is an \(SN\)-group. Therefore \((B,\cdot)\) is
an \(SN\)-group.
\end{proof}

We now prove that the obstruction exhibited by Nasybullov cannot be
detected by finite quotients.

\begin{proof}[Proof of Theorem \ref{intro:theorem-C}]
Let \(n\) be the derived length of \((B,+)\). We argue by induction on
\(n\). Suppose first that \(n=1\). Then \((B,+)\) is abelian, and hence
\((B,\cdot)\) is an \(SN\)-group by
Lemma~\ref{lem:multiplicative-SN}. Since quotients of \(SN\)-groups are
again \(SN\)-groups and every finite \(SN\)-group is solvable, every
finite quotient of \((B,\cdot)\) is solvable.
Assume now that \(n>1\), and set $D=(B,+)'.$
Since \(D\) is characteristic in \((B,+)\), it is an ideal of \(B\) by
Proposition~\ref{prop:characteristic-additive-ideal}. In particular,
\(D\) is a two-sided skew brace and \(B/D\) is again a two-sided skew
brace. Let \(N\trianglelefteq(B,\cdot)\) have finite index, and let $\pi\colon(B,\cdot)\longrightarrow F=(B,\cdot)/N$
be the canonical epimorphism. Since \(D\) is an ideal of \(B\),
\((D,\cdot)\trianglelefteq(B,\cdot)\), and therefore $\pi(D)\cong (D,\cdot)/(D\cap N).$
The group \(\pi(D)\) is finite, being a subgroup of \(F\). Moreover,
the derived length of \((D,+)\) is strictly smaller than \(n\). By the
induction hypothesis, every finite quotient of \((D,\cdot)\) is
solvable. Hence \(\pi(D)\) is solvable. Furthermore, \(\pi(D)\trianglelefteq F\), and
$F/\pi(D)\cong (B,\cdot)/(DN).$
Since \(D\) is an ideal, we have $(B/D,\cdot)\cong (B,\cdot)/D,$
and consequently
\[
(B,\cdot)/(DN)
\cong
\bigl((B,\cdot)/D\bigr)\big/\bigl((DN)/D\bigr)
\cong
(B/D,\cdot)\big/\bigl((DN)/D\bigr).
\]
Thus \(F/\pi(D)\) is a finite quotient of \((B/D,\cdot)\). On the other hand, $(B/D,+)\cong (B,+)/D$
is abelian. By the base case, every finite quotient of
\((B/D,\cdot)\) is solvable. Therefore \(F/\pi(D)\) is solvable. We have shown that \(\pi(D)\) is a solvable normal subgroup of \(F\)
and that \(F/\pi(D)\) is solvable. Hence \(F\) is solvable. This
completes the induction.
\end{proof}

\end{document}